\documentclass{amsart}

\usepackage{amsmath,amssymb,amsthm}
\usepackage{csquotes}
\MakeOuterQuote{"}
\usepackage{url}
\usepackage{bbm}
\usepackage{graphicx,tikz}
\usetikzlibrary{shapes,arrows}
\usetikzlibrary{calc, positioning, shapes, through, intersections}
\newtheorem{theorem}{Theorem}
\newtheorem*{theorem*}{Theorem}
\newtheorem*{lemma*}{Lemma}

\newtheorem{lemma}[theorem]{Lemma}

\newtheorem*{question}{Question}

\theoremstyle{definition}

\theoremstyle{remark}

\makeatletter
\def\Let@{\def\\{\notag\math@cr}}
\makeatother
\newcommand{\was}{\rm Wass}

\begin{document}
\title[]{Spectral convergence of probability densities for forward problems in uncertainty quantification}
\keywords{Density, Polynomial Chaos, Spectral, Uncertainty Quantification, Wasserstein, Legendre, pushforward.}
\subjclass[2010]{33C47, 62G07, 65C50, 65D15} 

\author[]{Amir Sagiv}
\address{Department of Applied Physics and Applied Mathematics, Columbia University, New York, NY 10027, USA}
\email{as6011@columbia.edu}

\begin{abstract} 
The estimation of probability density functions (PDF) using approximate maps is a fundamental building block in computational probability. We consider forward problems in uncertainty quantification: the inputs or the parameters of a deterministic model are random with a known distribution. The scalar quantity of interest is a fixed measurable function of the parameters, and is therefore a random variable as a well. Often, the quantity of interest map is not explicitly known and difficult to compute, and so the computational problem is to design a good approximation (surrogate model) of the quantity of interest. For the goal of approximating the {\em moments} of the quantity of interest, there is a well developed body of research. One widely popular approach is generalized Polynomial Chaos (gPC) and its many variants, which approximate moments with spectral accuracy. However, it is not clear whether the quantity of interest can be approximated with spectral accuracy as well. This result does not follow directly from spectrally accurate moment estimation.  In this paper, we prove convergence rates for PDFs using collocation and Galerkin gPC methods with Legendre polynomials in all dimensions. In particular, exponential convergence of the densities is guaranteed for analytic quantities of interest. In one dimension, we provide more refined results with stronger convergence rates, as well as an alternative proof strategy based on optimal-transport techniques. 
%The computation of probability density functions (PDF) using approximate maps (surrogate models) is a building block in such diverse fields as forward uncertainty quantification (UQ), sampling algorithms, learning, and inverse problems. In these settings, the probability measure $\mu=f_{\sharp}\varrho$ is the pushforward of a known probability measure $\varrho$ by a real function~$f$. The PDF of interest $p_{\mu}$ is approximated by a pushforward of $\varrho$ by an approximation of $f$. In field of UQ, the generalized Polynomial Chaos (gPC) method is widely popular and yields excellent approximations of $f$ and the moments of $\mu$. But can the gPC method yield spectrally-accurate approximation of the pushed-forward PDF $p_{\mu}$? In this paper, we prove the first results of this kind. We provide convergence rates for PDFs for collocation and Galerkin gPC methods in all dimensions. For $d=1$, we provide more refined results with stronger convergence rates, as well as an alternative proof strategy based on optimal-transport techniques.
\end{abstract}

\maketitle
\section{Introduction}

In the modeling of physical phenomena, taking uncertainties into consideration is often done by introducing random parameters into deterministic models. If the quantity of interest (model output) is a measurable function of the inputs, then it is a random as well. In this problem, known as the {\em forward problems} in uncertainty quantification (UQ), the computational task is to compute the induced probability measure of the quantity of interest given a prescribed probability measure on the inputs and parameters. The type of relevant information varies between applications, but in many cases the full probability density function (PDF) is of interest; See e.g., \cite{ablowitz2015interacting, chen2005uncertainty, colombo2018basins, 
estep2009nonparametric, zabaras2007sparse, le2010asynchronous, patwardhan2017loss,  best2017paper, piazzola2020uq, ullmann2014pod}. This paper will rigorously establish that there exist methods which approximate these PDFs with spectral accuracy.

One avenue to approximate the PDF in forward UQ problems is by nonparametric statistical approaches, e.g., a discretized cumulative distribution function, the histogram method, or kernel density estimators (KDE) \cite{tsybakov2009estimate, wasserman2006allnon}. %Typically, however, the error of these method decays only at a fractional power in the number of samples \cite{devroye1985l1, tsybakov2009estimate}, which renders these methods too slow for many practical purposes. 
	
To obtain better accuracy, one needs to utilize the underlying structure of forward UQ problems. That is, denote the input Borel probability space by $(\Omega, \varrho)$ and the Borel measurable model-output map by $f:\Omega \to \mathbb{R}$. The measure of interest $\mu=~f_{\sharp}\varrho$ is the {\em pushforward} of $\varrho$ by $f$, i.e., $\mu (A) = \varrho (f^{-1}(A))$ for every Borel set $A\subseteq~\mathbb{R}$. To take this structure into account, we turn to approximation-based methods (surrogate models). In this class of methods, $f$ is approximated by a simpler function $g$ for which the pushforward $g_{\sharp}\varrho$ is easier to compute \cite{ghanem2017handbook, xiu2010numerical}. For smooth quantities of interest with low- or moderate-dimensional domains, surrogate models have become the standard~\cite{ghanem2017handbook, sudret2000stochastic}, most prominently the polynomial-based methods known as generalized polynomial chaos~(gPC)~
\cite{ghanem2003stochastic, o2013polynomial, stefanou2009stochastic, xiu2010numerical} and their many variants, see e.g., \cite{lemaitre2004wavelet, tipireddy2014basis, wan2005adaptive}. The success of the gPC methods relies on their spectral $L^2(\Omega)$ convergence, when the original map is sufficiently smooth and specific domain and boundary conditions are met \cite{xiu2005collocation, xiu2002galerkin}. By spectral accuracy we mean the following: let $g_n$ be the gPC polynomial of order $n$ (see Sec.\ \ref{sec:pre}). Then $\|f-g_n\|_{L^2}$ decays polynomially in $n$ and the order of decay improves with the order of regularity of $f$. Crucially, the error decays {\em exponentially} in $n$ if $f$ is analytic. $L^2(\Omega)$ convergence implies convergence of the moments of $\nu _n :\,=(g_n)_{\sharp}\varrho$ to those of~$\mu$. Practically, this means that for analytic functions of interest $f$, the moments converge exponentially fast in the order of approximation.
\begin{question}
If $g_n$ is the gPC approximation of $f$, does $p_{\nu_n}$ converge to $p_{\mu}$ with spectral accuracy?
\end{question}
That a sequence of functions $g_n$ converges to $f$ in $L^2$ {\em does not guarantee} that the resulting PDFs $p_{\nu_n}$ converge to $p_{\mu}$ in any $L^q (\mathbb{R})$ norm, see counter-example in~\cite{sagiv2020uq}. Despite its practical relevance, the problem of density estimation using surrogate models has so far received little theoretical treatment, see one notable example in~\cite{butler2018convergence}. We previously showed that $L^2$ convergence of $g_n$ to $f$ guarantees convergence of $\nu_n $ to $\mu$ in the Wasserstein-$p$ metric, see details in \cite{sagiv2020wasserstein} and in Sec.~\ref{sec:wass}. Convergence in the Wasserstein metric, however, is in general weaker than convergence of the PDFs~\cite{gibbs2002choosing, le2012asymptotics}. To guarantee PDF approximation, Ditkowski, Fibich, and the author~\cite{sagiv2020uq} proved that more strict regularity conditions and $C^1$ convergence of $g_n$ to~$f$ guarantee convergence of $p_{\nu_n}$ to $p_{\mu}$ in $L^q(\mathbb{R})$ for all $1\leq q <\infty$ (see Theorem~\ref{thm:pf_local} below) and provided a spline-based method with provable convergence rates for the PDFs. Related, but not equivalent results, have also been obtained for certain monotonic triangular maps on $[-1,1]^d$ \cite{zech2020sparse}, and for the approximation of stochastic PDEs \cite{capodaglio2018approximation}. From a practical standpoint, it is desirable to have a surrogate model with spectrally convergent PDFs. One would like to prove that a well-known trade-off in function approximation (e.g., in $L^2$) holds for PDF approximation as well: on the one hand, local methods (like splines) are more robust to high derivatives and non-smoothness than spectral methods (like gPC). On the other hand, local methods are usually restricted to a fixed polynomial order of convergence, whereas spectral methods converge extremely fast for smooth functions. The question of spectral PDF convergence is therefore open and interesting.

%Moreover, previous numerical applications of the gPC method to PDF estimation show mixed results, working well in some cases \cite{colombo2018basins, best2017paper, ullmann2014pod}, and slow to converge in other applications \cite{chen2005uncertainty, sagiv2020uq}.

In this work, we answer the above question affirmatively for the gPC expansion with respect to Legendre polynomials. To the best of our knowledge, these are the first results of this kind. Under high regularity conditions, the PDFs obtained by the gPC method converge spectrally (Theorem~\ref{thm:main_md}), and in particular converge exponentially fast for analytic quantities of interest. In the one-dimensional case we prove a stronger result, that $\|p_{\mu}-p_{\nu_n}\|_{L^1(\mathbb{R})} \lesssim \|f-g_n\|_{H^1}$ (Theorem~\ref{thm:main_1d}), and provide an alternative spectral convergence result (Theorem \ref{thm:l1_1dwass}), for which the proof relies on the weak Wasserstein-$p$ convergence combined with a recent result by Chae and Walker \cite{chae2020wass}. In all these results, the analysis relies on three main analytic components - the relation between $f$ and $p_{\mu}$ \eqref{eq:pdfder}, the approximation power of gPC in Sobolev spaces \eqref{eq:canuto}, and Sobolev-Morrey embeddings in compact domains~\eqref{eq:embedding}. In this work, we study pushforward measures induced by full-grid (or tensor-product grid) polynomial approximations. In high dimensions, the use of tensor-product grids becomes computationally expansive due to the so-called curse of dimensionality - the computational cost required to achieve a certain accuracy increases exponentially with the dimension. Several approaches have been developed to perform high-dimensional uncertainty propagation, such as sparse grids \cite{back2011sparse, bungartz2004sparse, xiu2010numerical, xiu2005collocation} and adaptive methods \cite{wan2005adaptive}. While these methods are beyond the scope is of this work, our analysis lays the numerical analysis foundation to understand the pushforward measures they induce, and thus their adequacy for density estimation.

%Finally, we remark that the importance of pushforward measures goes further than forward problems in UQ. Algorithms in which a reference measure is being pushed forward to approximate a so-called target measure are used for sampling \cite{bogachev2005triangular, marzouk2016sampling, tabak2010density, zech2020sparse} as well as to perform statistical inference and solve generative machine learning problem~\cite{papa2019normalizing, rezende2015variational, tran2017implicit}. In inverse problems, the recovery of input densities often involve the application of an approximate forward map and the computation of the resulting densities \cite{butler2018convergence}.
% and in Bayesian inverse problems, so-called "brittleness" arises from the approximation of the forward map \cite{owhadi2015brittleness}. 
%It is therefore fundamental to understand the interplay between function approximation, and the approximation of the induced pushforward measures.

\subsection{Paper structure} The remainder of the paper is organized as follows: Sec.\ \ref{sec:pre} provides the preliminaries on the gPC method, pushforward measures and a more detailed account of previous results. Secs.\ \ref{sec:main} and \ref{sec:pf} detail the paper's main theoretical results and their proofs, respectively. We then turn in Sec.\ \ref{sec:wass} to provide an alternative, optimal-transport based analysis of the measure pushforward problem. Finally, in Sec.\ \ref{sec:numerics} we present numerical experiments and outline open problems.

\section{Preliminaries}\label{sec:pre}

\subsection{Generalized Polynomial Chaos}\label{sec:gpc}
We present a brief review of the gPC method, see \cite{xiu2010numerical} for a thorough introduction. For clarity, we first review the gPC method for the one-dimensional case. The Legendre polynomials $\{ p_n (\alpha ) \}_{n=0} ^{\infty }$ are the set of orthogonal polynomials with respect to the Lebesgue measure on $\Omega = ~[-1,1]$, see \cite{szego1939orthogonal}
$$ {\rm Deg} ( p_n ) = n \, , \qquad  \langle p_n , p_m \rangle_{L^2(\Omega)} =  \delta _{n,m} \, . $$ The Legendre polynomials constitute an orthonormal basis of $L^2(\Omega, d\alpha)$, i.e., 
\begin{equation}\label{eq:pc_expansion}
f(\alpha ) = \sum\limits_{n=0}^{\infty } \hat{f} (n) p_n (\alpha ) \, , \qquad \hat{f}(n) := \langle p_n, f \rangle_{L^2(\Omega)} \, , \qquad f\in L^2(\Omega, d\alpha) \, . 
\end{equation}
The {\bf Galerkin-gPC} expansion of a function $f\in L^2(\Omega)$ of order $n$ is the projection of $f$ to its first $n+1$ modes, i.e., $g_n (\alpha) = \sum_{j=0}^{n} \hat{f}(n) p_n (\alpha)$. In the multidimensional case $\Omega = [-1,1]^d$, the expansion of order $n$ of $f$ is defined as the $L^2(\Omega)$ projection of $f$ into the space of polynomials of maximal degree $n$~\cite{canuto1982approximation, ghanem2003stochastic, xiu2002galerkin} 
\begin{equation}\label{eq:galerkin}
g_n =P_nf  = \sum\limits_{\|{\bf j}\|_{\infty} \leq n} \langle p_{\bf j}, f \rangle _{L^2(\Omega)} p_{\bf j} \, , \qquad n\in \mathbb{N} \, ,
\end{equation}
where $p_{\bf j}$ is the tensor-product Legendre polynomial of multi-index ${\bf j}=(j_1, \ldots, j_d )$ and $\|{\bf j} \|_{\infty} = \max_ i |j_i|$. If $f$ can only be evaluated at a discrete set of points, the integrals in expansion coefficients~$\hat{f}(n)$ cannot be exactly computed. The {\bf collocation-gPC} approach is to approximate these coefficients using the Gauss quadrature formula: Let $q:\Omega \to \mathbb{R}$ be an integrable function, then the Gauss-Legendre quadrature formula of order $n$ is
$\int_{-1}^1 q(\alpha)\, d\alpha \approx \sum_{k=1}^n g (\alpha _k ) w_k$,  where $\{ \alpha _k\}_{k=1}^{N}$ are the distinct and real roots of $p_N(\alpha)$, $w_k  :=\int_{\Omega } l_k  (\alpha ) \, d\mu (\alpha )$ are the weights, and $l_k  (\alpha)$ are the Lagrange interpolation polynomials with respect at the quadrature points~\cite{davis1967integration}. Taking $q=p_j(\alpha)f(\alpha)$ yields the gPC collocation method in one dimension~\cite{xiu2005collocation},
\begin{subequations}\label{eq:gpc_col}
\begin{equation}
g_N (\alpha) :\,= \sum\limits_{j=0}^{N-1} \hat{f} _N (j) p_j (\alpha) \, ,
\end{equation}
\begin{equation}\label{eq:coefs_with_quad}
\hat{f} (j)  \approx \hat{f}_N (j) :\,= \sum\limits_{k=1}^N f\left( \alpha _k  \right) p_j  \left( \alpha _k  \right) w_k ,\qquad j=0,1,\ldots, N-1 \, .
\end{equation}
\end{subequations} 
The generalization of \eqref{eq:gpc_col} to multiple dimensions is a direct result of integration by tensor-grid Gauss quadratures, see~\cite{canuto1982approximation, davis1967integration, xiu2005collocation} for details. We further note that the gPC-collocation polynomial \eqref{eq:gpc_col} is also the polynomial interpolant of degree $N-1$ in the Guass-Legendre points, i.e., $g_n(\alpha_{\bf k}) = f(\alpha_{\bf k})$ for all quadrature points \cite{constantine2012sparse}.

\subsection{The approximation power of Legendre polynomials}\label{sec:approx}
Recall that for $A= [-1,1]^d$ or $ \mathbb{R}^d$, and any pair of integers $k,p \geq 1$, the corresponding Sobolev space is defined as \cite{adams2003sobolev, evans_pde}
$$W^{k,p}(A) :\,= \left\{ u:A\to \mathbb{C} ~~ | ~~\max\limits_{\|{\bf j}\|_1 \leq k } \|D^{\bf j} u \|_{L^p(A)} < \infty  ~ \right\} \, ,$$
where ${\bf j}$ is a multi-index and $D^{\bf j} u = (\Pi_{k=1}^d \partial_{\alpha _k}^{j_k})u(\alpha) $. For $p=2$ we will adopt the conventional notation $W^{k,2}(A)=  H^k (A)$. Both the Galerkin~\eqref{eq:galerkin} and the collocation expansion \eqref{eq:gpc_col} converge spectrally in $L^2(\Omega)$, where the underlying measure is the Lebsegue measure \cite{gotlieb2007spectral, trefethen2013book, wang2012convergence, xiu2010numerical}. By this we mean that if $f \in H^{k}(\Omega)$ for some $k\geq 0$, then \cite{canuto1982approximation, gotlieb2007spectral, xiu2010numerical} $$\|f-g_n\|_{L^2(\Omega)} \leq cn^{-k}\|f\|_{H^k(\Omega)} \, ,$$ 
and if $f$ is analytic (in the multivariate sense), then $\|f-g_n\|_{L^2(\Omega)}$ decays {\em exponentially} in $n$ \cite{davis1975interpolation}, See \cite{zech2020sparse} and the references therein for details.
From a probability and UQ point of view, the convergence in $L^2(\Omega)$ guarantees that the moments of $g_n$ converge to the moments of $f$. Indeed, if $\varrho$ is the Lebesgue measure on $\Omega$, a simple application of the Cauchy-Schwartz inequality shows that $$|\mathbb{E}f - \mathbb{E}g_n| \leq \int_{\Omega} (f(\alpha) -g(\alpha)) \, d\alpha \lesssim  \|f-g_n\|_{L^2(\Omega)} \, ,$$
and therefore for e.g., a smooth quantity of interest $f$, the gPC method provides an accurate approximation of $\mathbb{E}f$ for a relatively low order $n$. The convergence of other moments follow similarly, see e.g., \cite{sagiv2020uq}.\footnote{For measures $\varrho$ which are not the Lebesgue measure, see \cite{ditkowski2019spectral}.} Both gPC expansions \eqref{eq:galerkin} and~\eqref{eq:gpc_col} also converge in higher-regularity Sobolev spaces, as given by the now classical result:
\begin{theorem*}[Canuto and Quarteroni \cite{canuto1982approximation}]
For any $1\leq \beta  \leq \sigma $, there exists a constant $C=C(\beta, \sigma)$ such that
\begin{subequations}\label{eq:canuto}
\begin{equation}
\|f - g_n\|_{H^{\beta}(\Omega)} \leq Cn^{-e(\beta ,\sigma)} \|f\|_{H^{\sigma}(\Omega)} \, ,
\end{equation}
where $g_n $ is given by either \eqref{eq:galerkin} or \eqref{eq:gpc_col}, and 
\begin{equation}\label{eq:e_canuto}
e(\beta,\sigma) = 
\sigma + \frac12 - 2\beta    \, .
\end{equation}
\end{subequations}
\end{theorem*}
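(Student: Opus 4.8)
The plan is to prove the estimate in one dimension and recover $\Omega=[-1,1]^d$ by tensorization. This is natural here because \eqref{eq:galerkin} uses the maximal-degree (full tensor-grid) truncation $\|\mathbf{j}\|_\infty\le n$, so $P_n$ factors as the tensor product $\bigotimes_{i=1}^d P_n^{(i)}$ of the univariate projections, and likewise for \eqref{eq:gpc_col}. In one dimension the Galerkin projection is exactly the truncation of the Legendre series, $g_n=P_nf=\sum_{k=0}^n \hat f(k)\,p_k$, so that $f-g_n=\sum_{k>n}\hat f(k)\,p_k$, and the task reduces to controlling this tail in $H^\beta(\Omega)$ given only the coefficient decay forced by $f\in H^\sigma(\Omega)$.

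The natural device is the singular Sturm--Liouville structure of the Legendre family. With $Lu:=-\bigl((1-\alpha^2)u'\bigr)'$ one has $Lp_k=\lambda_k p_k$, $\lambda_k=k(k+1)$, so I would introduce the intrinsic Hilbert scale $\|u\|_{*,s}^2:=\sum_k (1+\lambda_k)^s|\hat u(k)|^2$, in which the $p_k$ remain orthogonal. In this scale truncation is an orthogonal projection and Parseval gives the tail \emph{exactly}:
\begin{equation}
\|f-g_n\|_{*,s}^2=\sum_{k>n}(1+\lambda_k)^s|\hat f(k)|^2\le (1+\lambda_{n+1})^{s-\sigma}\,\|f\|_{*,\sigma}^2,\qquad s\le\sigma,
\end{equation}
which already yields a clean algebraic rate in the intrinsic scale. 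The remaining work is to exchange these intrinsic norms for the ordinary Sobolev norms appearing in \eqref{eq:canuto}.

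This exchange is the crux and, I expect, the main obstacle. Because the weight $(1-\alpha^2)$ degenerates at $\alpha=\pm1$, the operator $L$ is not uniformly elliptic and the $*$-scale is \emph{not} equivalent to the standard Sobolev scale: away from the endpoints one power of $L$ controls two ordinary derivatives, but near $\pm1$ each ordinary derivative is far more costly. The quantitative price is a Markov-type inverse inequality $\|\phi'\|_{L^2(\Omega)}\le CN^2\|\phi\|_{L^2(\Omega)}$, valid for every $\phi\in\mathbb{P}_N$; grouping the tail $f-g_n$ into dyadic polynomial blocks $(P_{2^{j+1}}-P_{2^j})f$, applying this inequality blockwise, and summing against the $L^2$ coefficient decay reproduces the factor $n^{2\beta}$ — hence the coefficient $2$ multiplying $\beta$ in $e(\beta,\sigma)=\sigma+\tfrac12-2\beta$. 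The subtler $+\tfrac12$ boundary correction comes from the \emph{sharp} endpoint asymptotics of the Legendre polynomials and their derivatives (equivalently, from carrying the computation in the weighted rather than the worst-case inverse inequality), and recovering it — rather than the lossy $n^{2\beta-\sigma}$ that the crude block argument gives — is where the real effort lies. Non-integer $\beta,\sigma$ are then handled by interpolation between consecutive integer orders.

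Finally, the collocation operator \eqref{eq:gpc_col} differs from the Galerkin projection by an aliasing term, $g_N^{\mathrm{col}}-P_{N-1}f$, generated by the Gauss--Legendre quadrature mistaking high modes for low ones. I would bound this term using the exactness of the $N$-point rule on $\mathbb{P}_{2N-1}$ together with the stability of the associated discrete inner product, which lets the same coefficient-decay estimates absorb the aliasing without loss of rate; the tensor-grid quadrature then makes the multidimensional collocation case follow identically.
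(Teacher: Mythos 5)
First, a point of comparison: the paper does not prove this statement at all --- it is quoted as a classical external result of Canuto and Quarteroni \cite{canuto1982approximation} --- so your proposal must be measured against that original proof. Its strategy (the Sturm--Liouville eigenstructure of the Legendre family, inverse inequalities for algebraic polynomials, interpolation between integer orders, and an aliasing argument for collocation) is indeed the framework you outline, so the skeleton of your plan is the right one.

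The decisive problem is a gap that you yourself name but do not close. The argument you actually carry out --- Parseval in the intrinsic scale $\|u\|_{*,s}$, dyadic blocks $(P_{2^{j+1}}-P_{2^j})f$, and the Markov inequality $\|\phi'\|_{L^2(\Omega)}\le CN^2\|\phi\|_{L^2(\Omega)}$ --- yields only $\|f-g_n\|_{H^{\beta}(\Omega)}\lesssim n^{2\beta-\sigma}\|f\|_{H^{\sigma}(\Omega)}$, which is weaker by half a power of $n$ than the claimed rate $n^{2\beta-\sigma-\frac12}$ in \eqref{eq:canuto}. That extra $\tfrac12$ is not a cosmetic boundary correction to be appended afterwards; it is the entire nontrivial content of the theorem beyond the crude inverse-inequality bound, and it is material downstream, since the exponents in Theorems \ref{thm:main_md} and \ref{thm:main_1d} (e.g., $\sigma_{\min}$ in \eqref{eq:sigma_min} and the rate $n^{\frac32-\sigma}$ in \eqref{eq:conv1d}) inherit it. Recovering it cannot be done with worst-case polynomial inequalities: one must use the specific structure of the Legendre tail, e.g., the explicit expansion of $p_k'$ in lower-degree Legendre polynomials --- equivalently the fact that $\|p_k'\|_{L^2(\Omega)}\sim k^{3/2}$ rather than the Markov-worst-case $k^2$ --- together with a genuine treatment of the non-orthogonality of the derivatives $\{p_k'\}$ appearing in the tail. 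This is precisely where Canuto and Quarteroni's real work lies, and your proposal stops at its doorstep with the phrase ``where the real effort lies''; a plan that identifies the hard step without executing it is not a proof. The two steps you defer by fiat are also incomplete, though less critically: the collocation case needs the aliasing bound in $H^{\beta}$ for $\beta\ge 1$ (exactness plus stability of the discrete inner product handles only $L^2$, after which the same half-power issue reappears), and the tensorization to $[-1,1]^d$ requires a telescoping identity together with uniform Sobolev bounds on the univariate projections, which is standard but not automatic.
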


 If $f$ is analytic (for $d=1$) on an ellipse in $\mathbb{C}$ with foci at $\pm 1$ and with both axis summing to~$r > 1$, then in one dimension $ \hat{f}(n) = O(n^{-1/2}r^{-n})$ \cite{wang2020fast, wang2012convergence}. By Parseval identity (with respect to Legendre polynomials),
$$\|f-g_n\|_{L^2(\Omega)}^2 = \sum\limits_{j=n+1}^{\infty} |\hat{f}(n)|^2 \lesssim \sum\limits_{j=n+1}^{\infty} n^{-1}r^{-2n} \, ,$$
and so the $L^2$ approximation error is almost exponential as well. Furthermore, by uniform convergence of $g_n$ to $f$ on $\Omega$, one can differentiate $g_n$ term-wise, yielding faster than polynomial (exponential for all practical purposes) convergence of $g_n$ to $f$ in all Sobolev spaces $H^k (\Omega)$ with $r\geq 0$.

\subsection{Pushforward measures and prior results}
The pushforward of a Borel measure $\varrho$ on $\Omega$ by a measurable $f:\Omega \to \mathbb{R}$ is a measure $\mu:\,=f_*\varrho$ defined by $\mu(A) = \varrho(f^{-1}(A))$ for any Borel subset $A\subseteq \mathbb{R}$. If a measure $\mu$ on $\mathbb{R}$ is absolutely continuous with respect to the Lebesgue measure, its probability density function (PDF) is its Radon-Nykodim derivative, i.e., $p_{\mu}\in L^1(\mathbb{R})$ which satisfies $\mu (A) = \int_{A} p_{\mu}(y) \, dy $ for any Borel set $A\subseteq \mathbb{R}$. Alternatively, if the cumulative distribution function (CDF) $F_{\mu}(y) = \mu(f^{-1}(-\infty, y))$ is differentiable, then $p_{\mu}(y) = dF_{\mu}(y)/dy$. In the one-dimensional case, if $f$ is piecewise $C^1$ and piecewise monotonic, and $\varrho$ is an absolutely continuous probability measure, then \cite{sagiv2020uq}
 \begin{equation}\label{eq:pdfder}
 p_{\mu}(y)= \sum\limits_{f(\alpha)=y} \frac{p_{\varrho}(\alpha)}{|f'(\alpha)|}\, . 
 \end{equation} This relation is the source of many of the difficulties and peculiarities in understanding PDFs	 of pushforward measures. For example, if $f(J)= c$ for some constant $c$ on an interval $J$ with $\varrho(J)>0$, then $\mu$ has a singular part at $c$ and therefore has no PDF. Even for a non-constant smooth monotonic function such as $f(\alpha)=\alpha^2$ and a simple $\varrho$ such as the uniform measure on $[0,1]$, then $p_{\mu}(y) \sim 1/\sqrt{y}$ is singular. Analogously for $d>1$, then  $p_{\mu}\sim \int_{f^{-1}(y)} p_{\varrho}|\nabla f|^{-1} d\sigma $, where $d\sigma$ is the $(d-1)$-dimensional surface elements.

The practical goal of surrogate models in the context of PDF approximation is to approximate $\mu$ by $\nu_n = (g_n)_{\sharp}\varrho$ with small error terms $\|p_{\mu}-p_{\nu _n}\|_{L^q(\mathbb{R})}$ for some $q\geq 1$, while maintaining $n$ small, as $n$ is a good proxy to the computational cost. To see that, first note that since sampling arbitrarily many times from the polynomial $g_n$ is computationally cheap, approximating $p_{\nu _n}$ to arbitrary precision given $g_n$ is relatively cheap as well, see the analysis in \cite{sagiv2020uq}. Therefore, it is constructing $g_n$ which is costly. In the collocation gPC \eqref{eq:gpc_col}, the larger the degree of approximation $g=g_n$ is, the more evaluations of $f$ are required. If for example $f$ models the response of a partial differential equation (PDE), each such evaluation amounts to a solution of the PDE, which is usually computationally expansive. In the Galerkin-type gPC \eqref{eq:galerkin}, one usually projects the original PDE with random parameters to several PDEs with deterministic parameters, the number of which also grows with the degree $n$, see \cite{xiu2010numerical} for details. Therefore, in either case guaranteeing good accuracy for a small degree $n$ should be the goal of our analysis.

 As noted in the introduction, $L^2(\Omega)$ convergence alone does not guarantee convergence of the pushforward PDFs. Indeed, one can construct a sequence such that $g_n \to f$ in $L^2(\Omega)$ but $\|p_{\nu_n}-p_{\mu}\|_{L^q(\mathbb{R})} >{\rm const}(q)$ for all $n,q\geq 1$ \cite{sagiv2020uq}. Previously however~\cite{sagiv2020wasserstein}, we proved that $L^2$ convergence is sufficient to establish convergence in the weaker Wasserstein metric, or more precisely, that $\was_2(f_{\sharp}\varrho,g_{\sharp}\varrho ) \leq \|f-g\|_{L^2(\Omega, \varrho)}$ (see relevant definitions in Sec.\ \ref{sec:wass}). Furthermore, we showed that uniform boundedness of $\|f-g_n\|_{\infty}$ combined with $L^2(\Omega)$ convergence guarantees Wasserstein-$p$ convergence for any $1\leq p <\infty$, see \eqref{eq:wass_inter} below. Since $\was_1(\mu,\nu)$ is equal to the $L^1(\mathbb{R})$ convergence of the CDFs \cite{salvemini1943sul, vallender1974calculation}, the convergence of the CDFs in $L^1$ is a corollary of our result. A result in the direction of PDF approximation by surrogate methods was obtain by Ditkowski, Fibich, and the author in \cite{sagiv2020uq}. If $g_n$ is taken to be the spline interpolant of $f$ of order $m$, on e.g., a uniform grid with a total of $N$ grid points, then $\|p_{\mu}-p_{\nu_n}\|_{L^q(\mathbb{R})} \lesssim N^{-m/d}$ for any $1\leq q < \infty$. The main characteristic of splines that is useful to establish this result is the {\em pointwise} $C^1(\Omega)$ convergence of $g_n$ to $f$, see Theorem \ref{thm:pf_local} below. We will use this tool tool later to prove Theorem~\ref{thm:main_md}.

\subsection{Problem formulation} Let $\Omega = \Omega (d) =[-1,1]^d$ be equipped with an absolutely continuous probability measure $\varrho$. Consider a smooth function of interest $f:\Omega  \to \mathbb{R}$ and let $g_n:\Omega\to \mathbb{R}$ be its generalized polynomial chaos (gPC) approximation - either its $L^2(\Omega)$ projection to the space of Legendre polynomials of order~$\leq n$ (Galerkin type) {\em or} its polynomial interpolant on the Gauss-Legendre quadrature points of order~$n$ (collocation type). Consider the pushforward measures $\mu :\,= f_* \varrho$ and $\nu_n :\,= (g_n)_* \varrho$ and denote their respective probability density functions (PDF) by $p_{\mu}$ and $p_{\nu_n}$. In what follows, we see under what conditions $p_{\nu_n}$ converges to~$p_{\mu}$ in $L^q(\mathbb{R})$ for $1\leq q < \infty$, and find the convergence rates.

\section{Main results}\label{sec:main}

The convergence of the pushed-forward PDFs is guaranteed by the following:

\begin{theorem}\label{thm:main_md}
Let $\Omega = [-1,1]^d$ for any $d\geq 1$, let $f\in  H^{\sigma }(\Omega)$ where 
\begin{equation}\label{eq:sigma_min}
\sigma \geq \sigma_{\min} (d)
= \left\{\begin{array}{ll}
5\frac12   +d \, , & d~{\rm even} \, , \\
4\frac12 +d \, , & d~{\rm odd} \, ,
\end{array} \right. 
\end{equation}
let $d\varrho(\alpha)= r(\alpha)d\alpha$ with $r\in C^1(\Omega)$, and assume that $|\nabla f|>\kappa_f >0$. Then for any $1\leq q <\infty$,
$$\|p_{\mu}-p_{\nu_n}\|_{L^q(\mathbb{R})} \lesssim \|f-g_n\|_{C^1(\Omega)} \lesssim n^{-\sigma + \sigma_{\min} -2} \|f\|_{H^{\sigma}(\Omega)} \, .$$
\end{theorem}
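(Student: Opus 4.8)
The plan is to factor the claimed estimate into its two displayed inequalities and establish each in turn; the right-hand one is a direct combination of the two approximation tools already quoted, while the left-hand one is an application of the $C^1$-to-PDF transfer result. For the right-hand inequality I would combine the Sobolev--Morrey embedding \eqref{eq:embedding} with the Canuto--Quarteroni estimate \eqref{eq:canuto}. Fix the smallest integer $\beta$ for which $H^{\beta}(\Omega)\hookrightarrow C^{1}(\Omega)$ is continuous; since this requires $\beta>1+\tfrac{d}{2}$, one takes $\beta=\tfrac{d}{2}+2$ for $d$ even and $\beta=\tfrac{d+3}{2}$ for $d$ odd. Then $\|f-g_n\|_{C^1(\Omega)}\lesssim\|f-g_n\|_{H^{\beta}(\Omega)}\lesssim n^{-e(\beta,\sigma)}\|f\|_{H^{\sigma}(\Omega)}$, and substituting these values into \eqref{eq:e_canuto} gives $e(\beta,\sigma)=\sigma-d-\tfrac72$ (even) and $\sigma-d-\tfrac52$ (odd), which in both parities equals $\sigma-\sigma_{\min}+2$ — exactly the claimed exponent. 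The hypothesis $\sigma\geq\sigma_{\min}$ ensures in particular $\beta\leq\sigma$, so that \eqref{eq:canuto} is applicable.

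For the left-hand inequality I would invoke the $C^1$ transfer result, Theorem~\ref{thm:pf_local}, which under suitable regularity yields $\|p_{\mu}-p_{\nu_n}\|_{L^q(\mathbb{R})}\lesssim\|f-g_n\|_{C^1(\Omega)}$; the work is to verify its hypotheses with constants uniform in $n$. The assumption $|\nabla f|>\kappa_f>0$ rules out critical points of $f$, and once $\|f-g_n\|_{C^1}\leq\kappa_f/2$ (true for large $n$ by the previous paragraph) one gets $|\nabla g_n|\geq\kappa_f/2>0$ uniformly, so that neither $\mu$ nor $\nu_n$ carries a singular part and both densities are represented by the coarea form of \eqref{eq:pdfder}.

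The remaining ingredient, which I expect pins down the precise value of $\sigma_{\min}$, is a uniform $C^2$ bound on $g_n$: the constant in Theorem~\ref{thm:pf_local} depends on the second derivatives of the maps, since these control the curvature of the level sets entering the surface-integral representation of the densities. To secure it I would pick the smallest integer $\beta'$ with $H^{\beta'}(\Omega)\hookrightarrow C^{2}(\Omega)$, namely $\beta'=\tfrac{d}{2}+3$ (even) or $\tfrac{d+5}{2}$ (odd), and note that $\|g_n\|_{C^2}\leq\|f\|_{C^2}+\|f-g_n\|_{H^{\beta'}}\lesssim\|f\|_{H^{\sigma}}$ as long as $e(\beta',\sigma)\geq0$, i.e.\ $\sigma\geq 2\beta'-\tfrac12$. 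A direct computation gives $2\beta'-\tfrac12=\sigma_{\min}(d)$ in both parities, so the standing hypothesis is exactly what is needed to keep $\|g_n\|_{C^2}$ bounded along the whole sequence; the Sobolev embedding $H^{\sigma}\hookrightarrow C^2$ simultaneously makes $\|f\|_{C^2}$ finite.

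With these uniform bounds in hand — $\|f\|_{C^2},\ \sup_n\|g_n\|_{C^2}<\infty$, $\inf_n\inf_{\Omega}|\nabla g_n|>0$, and $r\in C^1$ — Theorem~\ref{thm:pf_local} delivers the left-hand inequality with an $n$-independent constant, and chaining the two inequalities finishes the proof (the estimate is read for $n$ large enough that $\|f-g_n\|_{C^1}\leq\kappa_f/2$, which is all the asymptotic rate requires). I expect the main obstacle to be this uniformity: one must check that the constant furnished by Theorem~\ref{thm:pf_local} depends only monotonically on the $C^2$ norms and on the gradient lower bound, so that the uniform-in-$n$ estimates above collapse to a single constant. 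A secondary subtlety is that \eqref{eq:canuto} is stated for integer-order target spaces, which forces the integer choices of $\beta$ and $\beta'$ and hence the half-integer values appearing in $\sigma_{\min}$; a fractional-order refinement of \eqref{eq:canuto} could in principle lower $\sigma_{\min}$, but I would not pursue it here.
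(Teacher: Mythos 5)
Your proposal is correct and follows essentially the same route as the paper: the paper's Lemma~\ref{lem:c2} makes exactly your two embedding-plus-\eqref{eq:canuto} computations (with $\beta=2+\lfloor d/2\rfloor$ for the $C^1$ rate and $\beta'=3+\lfloor d/2\rfloor$ for the uniform $C^2$ bound, the latter fixing $\sigma_{\min}=2\beta'-\tfrac12$), and the theorem then follows by citing Theorem~\ref{thm:pf_local} just as you do. Your exponent $\sigma-\sigma_{\min}+2$ (i.e.\ rate $n^{-\sigma+\sigma_{\min}-2}$) agrees with the theorem statement and with the computation inside the paper's lemma, whose displayed statement $n^{-\sigma+\sigma_{\min}+2}$ contains a sign typo.
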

For the one-dimensional case, the result can be improved:
\begin{theorem}
\label{thm:main_1d}
Let $\Omega =[-1,1]$, let $f\in H^{\sigma}(\Omega)$ with $\sigma \geq 6$ and let $d\varrho(\alpha)= r(\alpha)d\alpha$ with $r\in C^1(\Omega)$. If $f'(\alpha_j)=0$ for finitely many points $\alpha_1, \ldots , \alpha _J \in \Omega $ and there exist $k_j\geq 2$ for each $1\leq j \leq J$ such that $|f^{(k_j+1)}(\alpha_j)| >0$, then 

\begin{equation}\label{eq:conv_sing}
\|p_{\mu}-p_{\nu_n}\|_{L^1(\mathbb{R})}\lesssim \|f-g_n\|_{H^1(\Omega)}^{\frac{1}{2k+1}}\lesssim \|f\|_{H^{\sigma}(\Omega)}^{\frac{1}{2k+1}} n^{-\frac{2\sigma-3}{2(2k+1)}}  \, ,
\end{equation}
where $k= \max_j k_j$. In particular, if $|f'(\alpha)|>\kappa_f >0$ for all $\alpha \in \Omega$, then
\begin{equation}\label{eq:conv1d}
\|p_{\mu}-p_{\nu_n}\|_{L^1(\mathbb{R})}\lesssim \|f-g_n\|_{H^1(\Omega)} \lesssim \|f\|_{H^{\sigma}(\Omega)} n^{\frac32 - \sigma } \,.
\end{equation}
Furthermore, if $f$ is analytic on an ellipse in $\mathbb{C}$ with foci at $\pm 1$ and with both radius summing to $r>1$, $\|p_{\mu}-p_{\nu}\|_{L^1(\mathbb{R})}$ converges faster than any polynomial in $n$. 
\end{theorem}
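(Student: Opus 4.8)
The plan is to establish the monotone bound \eqref{eq:conv1d} first, since it is the analytic core, and then derive the singular bound \eqref{eq:conv_sing} by excising small neighborhoods of the critical points and reducing to the monotone case on each branch. Assume first $|f'|>\kappa_f>0$. Because $\sigma\geq 6$, the Canuto--Quarteroni estimate \eqref{eq:canuto} with $\beta=3$ together with the one-dimensional embedding $H^3(\Omega)\hookrightarrow C^2(\Omega)$ gives $\|f-g_n\|_{C^2(\Omega)}\to 0$; hence $\|g_n''\|_\infty$ is bounded uniformly in $n$, and for $n$ large $|g_n'|\geq\kappa_f/2$, so $g_n$ is strictly monotone. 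Both $f$ and $g_n$ are then diffeomorphisms onto their images, \eqref{eq:pdfder} reduces to a single term, and writing $\theta(\alpha):=g_n^{-1}(f(\alpha))$ and changing variables $y=f(\alpha)$ turns the bulk of $\|p_\mu-p_{\nu_n}\|_{L^1(\mathbb{R})}$ into
\[ \int_{\Omega}\left| r(\alpha)-\frac{r(\theta(\alpha))\,f'(\alpha)}{g_n'(\theta(\alpha))}\right|\,d\alpha, \]
while the symmetric difference of the two image intervals has length $\lesssim\|f-g_n\|_\infty$ and bounded density, contributing $\lesssim\kappa_f^{-1}\|f-g_n\|_\infty$.

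The decisive step, and the reason one gains $H^1$ rather than the $C^1$ of Theorem \ref{thm:main_md}, is to telescope the integrand as $r(\alpha)g_n'(\theta)-r(\theta)f'(\alpha)=r(\alpha)[g_n'(\theta)-f'(\alpha)]+f'(\alpha)[r(\alpha)-r(\theta)]$ and further $g_n'(\theta)-f'(\alpha)=[g_n'(\theta)-g_n'(\alpha)]+[g_n'(\alpha)-f'(\alpha)]$. Using $|\theta(\alpha)-\alpha|\lesssim\kappa_f^{-1}|f(\alpha)-g_n(\alpha)|$, the mean value theorem, $r\in C^1$, and the uniform bound on $\|g_n''\|_\infty$, every term is controlled pointwise by a constant times $|f-g_n|$ except for the genuine derivative error $|g_n'(\alpha)-f'(\alpha)|$. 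Integrating over the compact $\Omega$ and applying Cauchy--Schwarz then bounds the whole expression by $\lesssim\kappa_f^{-2}\big(\|f-g_n\|_{L^2}+\|f'-g_n'\|_{L^2}\big)\lesssim\kappa_f^{-2}\|f-g_n\|_{H^1(\Omega)}$; the point is that the derivative discrepancy is measured in $L^2$, not $L^\infty$. Inserting \eqref{eq:canuto}--\eqref{eq:e_canuto} with $e(1,\sigma)=\sigma-3/2$ gives \eqref{eq:conv1d}.

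For \eqref{eq:conv_sing} I would fix a scale $\delta>0$ and remove the intervals $I_j=(\alpha_j-\delta,\alpha_j+\delta)$. On the good set $G=\Omega\setminus\bigcup_j I_j$ the assumption $|f^{(k_j+1)}(\alpha_j)|>0$ yields $|f'|\gtrsim\delta^{k}$ with $k=\max_j k_j$, so the monotone analysis applies on each of the finitely many monotonicity branches of $G$ with $\kappa\sim\delta^k$, contributing $\lesssim\delta^{-2k}\|f-g_n\|_{H^1(\Omega)}$. The complementary contribution I would bound by the masses $\mu(Y_B)+\nu_n(Y_B)$, where $Y_B$ is a neighborhood of the critical values $f(\alpha_j)$ chosen so that $f^{-1}(Y_B)\subseteq\bigcup_j(\alpha_j-C\delta,\alpha_j+C\delta)$: the local model $f(\alpha)-f(\alpha_j)\sim(\alpha-\alpha_j)^{k_j+1}$ then gives $\mu(Y_B)\lesssim\delta$, and since $\|f-g_n\|_\infty$ is small the same bound holds for $\nu_n(Y_B)$ by comparison to $f$, so no matching of the critical structure of $g_n$ is needed. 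Hence $\|p_\mu-p_{\nu_n}\|_{L^1}\lesssim\delta+\delta^{-2k}\|f-g_n\|_{H^1}$, and optimizing $\delta\sim\|f-g_n\|_{H^1}^{1/(2k+1)}$ produces the exponent $\tfrac{1}{2k+1}$; the second inequality in \eqref{eq:conv_sing} is again \eqref{eq:canuto}. The analytic statement then follows at once, since a non-constant analytic $f$ has finitely many critical points of finite order on $[-1,1]$, so $k<\infty$, while $\|f-g_n\|_{H^1}$ decays faster than any polynomial and raising it to the fixed power $1/(2k+1)$ preserves this.

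The step I expect to be most delicate is the bookkeeping of the singular case: one must check that the optimal $\delta$ is compatible with the hypotheses of the monotone estimate, namely that $g_n$ stays monotone on $G$, which requires $\|f-g_n\|_{C^1}\lesssim\delta^{k}\sim\|f-g_n\|_{H^1}^{k/(2k+1)}$, and that $\|g_n''\|_\infty$ remains bounded. Verifying the first against the Canuto--Quarteroni rates amounts to the inequality $\sigma-\tfrac72\geq(\sigma-\tfrac32)\tfrac{k}{2k+1}$, which is tightest as $k\to\infty$ and still holds at $\sigma=6$; this is precisely where the threshold $\sigma\geq 6$ originates, and it is what makes the regularity requirement uniform in $k$.
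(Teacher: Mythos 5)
Your proposal is correct and follows essentially the same route as the paper: the same change of variables $y=f(\alpha)$ with $\alpha_*=g_n^{-1}(f(\alpha))$, the same telescoping of $r(\alpha)g_n'(\alpha_*)-r(\alpha_*)f'(\alpha)$ controlled via the mean value theorem, uniform $C^2$ bounds, and Cauchy--Schwarz to reach the $H^1$ bound in the monotone case, and the same excision of $\delta$-neighborhoods of the critical points, mass bound $\lesssim\delta$ on the bad set, $\delta^{-2k}\|f-g_n\|_{H^1}$ on the good set, and optimization $\delta\sim\|f-g_n\|_{H^1}^{1/(2k+1)}$ in the singular case. Your closing compatibility check---that $g_n$ remains monotone on the good set at the \emph{optimal}, $n$-dependent scale $\delta$, verified against the Canuto--Quarteroni rates---is in fact slightly more careful than the paper's own treatment, which argues sign-agreement of $f'$ and $g_n'$ only for fixed $\varepsilon$ and sufficiently large $n$.
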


Theorem \ref{thm:main_1d} improves Theorem \ref{thm:main_md} in the case of $d=1$ in two ways - First, it allows for points with $f' = 0$. Second, Theorem \ref{thm:main_1d} improves the convergence rate by three orders, from $n^{4\frac12-\sigma}$ to $n^{\frac32 - \sigma}$. Generally, Theorem~\ref{thm:main_1d} also yields much smaller constants. These improvements are owed to our ability to link the PDF convergence directly to $H^1(\Omega)$ convergence of $g_n$ to $f$. Since the PDFs depend on $f'$, it is conjectured that convergence of $g_n$ to $f$ in $H^1(\Omega)$ is the weakest $H^k (\omega)$ convergence that guarantees convergence of the PDFs, thus possibly leading to a sharp rate in \eqref{eq:conv1d}. In Theorem \ref{thm:main_md}, we use $C^1(\Omega)$ convergence of $g_n$ to $f$, which by Sobolev embedding depends on the much stronger, and therefore much slower, $H^{2+\lfloor d/2\rfloor}(\Omega)$ convergence. The improvement of rates also implies the improvement of constants, as can be observed from the Canuto and Quarteroni result \eqref{eq:canuto}. Suppose one wishes to guarantee $\|p_{\mu}-p_{\nu_n}\|_1 \leq C n^{-9/2}$, and that $f' > \kappa_f >0$. Since Theorem \ref{thm:main_1d} uses $H^1$ convergence, the constant would increase with $\|f\|_{H^6}$, whereas in Theorem \ref{thm:main_md} it would depend on $\|f\|_{H^9}$. 

That the convergence constants depend on high-regularity Sobolev norm is emblematic of global methods in general, and spectral methods in particular: To approximate a function with high derivatives well, i.e., in the asymptotically guaranteed rate, one has to to use a gPC polynomial of a relatively high order. This high threshold resolution is often embodied in the constants of the upper bound. Here one draws a distinction between the spline-based surrogate proposed in \cite{sagiv2020uq} and the spectral methods: splines guarantee polynomial convergence with low-sensitivity to high-derivatives. Global polynomial methods, such as gPC, can provide exponential accuracy if $f$ is very smooth in comparison to the sampling resolution. Another way in which even Theorem \ref{thm:main_1d} depends on high-order Sobolev norms, is that it requires that $f$ is {\em at least} in $H^6$. This is a technical requirement that is needed to guarantee that $\|f\|_{C^2}$ does not grow with $n$, see Lemma \ref{lem:c2}. Simulations seem to suggest that this is not a sharp requirement, see Sec.\ \ref{sec:numerics}.

%As noted, Theorem \ref{thm:main_1d} allows for $f'$ to vanish, which makes it highly more applicable. Indeed, in a forward UQ problem where $f$ is derived from e.g., a PDE, it is rarely possible to verify a-priori that $f'$ never vanishes. The method we use in the proof here could also be applied to $C^m(\Omega)$ approximation methods in one dimension, like splines, thus improving the results in \cite{sagiv2020uq}. It is an interesting open question whether the demand $|\nabla f |> \kappa_f >0$ could be relaxed in higher dimensions as well.

\section{Proofs of main results}\label{sec:pf}
Throughout this paper we need to establish the $C^1$ approximation of $f$ by $g_n$, and that $\|g_n\|_{C^2(\Omega)}$ is uniformly bounded in $n$.
\begin{lemma}\label{lem:c2}
Under the conditions of Theorem \ref{thm:main_md}, $\|g_n\|_{C^2(\Omega)}$ is uniformly bounded for all $n\in \mathbb{N}$, and 
\begin{equation}\label{eq:c1_conv}
\|f -g_n\|_{C^1(\Omega)} \lesssim n^{-\sigma +\sigma_{\min}+2} \|f\|_{H^{\sigma}(\Omega)} \, .
\end{equation} 
\end{lemma}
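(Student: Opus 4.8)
The proof of both assertions rests on pairing the Sobolev--Morrey embedding \eqref{eq:embedding} with the Canuto--Quarteroni estimate \eqref{eq:canuto}: the plan is to control each $C^m$ quantity by an $H^s$ quantity through the embedding, and then read off the algebraic rate from $e(s,\sigma)$ in \eqref{eq:e_canuto}. To this end I would fix the two integer indices $\beta = \min\{ s\in\mathbb{N} : s > 1 + d/2\}$ and $\gamma = \min\{ s\in\mathbb{N} : s > 2 + d/2\}$, so that on the Lipschitz domain $\Omega = [-1,1]^d$ one has the continuous embeddings $H^{\beta}(\Omega)\hookrightarrow C^1(\Omega)$ and $H^{\gamma}(\Omega)\hookrightarrow C^2(\Omega)$. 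A short parity computation gives $\beta = 2 + d/2$, $\gamma = 3 + d/2$ for even $d$, and $\beta = (d+3)/2$, $\gamma = (d+5)/2$ for odd $d$; in both cases $\gamma = \beta + 1$ and, crucially, $2\gamma - \tfrac12 = \sigma_{\min}(d)$. This last identity is precisely how the definition \eqref{eq:sigma_min} arises, and it drives the whole argument.

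For the $C^1$ convergence rate I would chain the embedding with \eqref{eq:canuto}. Since $\beta \le \sigma_{\min}\le \sigma$, the Canuto--Quarteroni theorem applies with lower index $\beta$, giving
\begin{equation*}
\|f - g_n\|_{C^1(\Omega)} \lesssim \|f - g_n\|_{H^{\beta}(\Omega)} \lesssim n^{-e(\beta,\sigma)}\|f\|_{H^{\sigma}(\Omega)} .
\end{equation*}
Substituting $e(\beta,\sigma) = \sigma + \tfrac12 - 2\beta$ and using $2\beta = 2\gamma - 2 = (\sigma_{\min}+\tfrac12) - 2$ yields $e(\beta,\sigma) = \sigma - \sigma_{\min} + 2$, i.e. the decay $n^{-\sigma + \sigma_{\min} - 2}$, which is exactly the exponent propagated into Theorem~\ref{thm:main_md}.

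For the uniform $C^2$ bound I would use the triangle inequality $\|g_n\|_{C^2(\Omega)} \le \|f\|_{C^2(\Omega)} + \|f - g_n\|_{C^2(\Omega)}$. Because $\sigma \ge \sigma_{\min} = 2\gamma - \tfrac12 > \gamma > 2 + d/2$, the embedding gives $f \in C^2(\Omega)$, so $\|f\|_{C^2(\Omega)}$ is a finite constant independent of $n$. For the remaining term, embed and apply \eqref{eq:canuto} with index $\gamma$ (valid since $\gamma \le \sigma_{\min}\le \sigma$):
\begin{equation*}
\|f - g_n\|_{C^2(\Omega)} \lesssim \|f - g_n\|_{H^{\gamma}(\Omega)} \lesssim n^{-e(\gamma,\sigma)}\|f\|_{H^{\sigma}(\Omega)} ,
\end{equation*}
where $e(\gamma,\sigma) = \sigma + \tfrac12 - 2\gamma = \sigma - \sigma_{\min} \ge 0$ by hypothesis. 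Hence $\|f - g_n\|_{C^2(\Omega)}$ stays bounded in $n$ (and even decays when $\sigma > \sigma_{\min}$), which gives the claimed uniform $C^2$ bound.

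The substantive point, and the only genuine obstacle, is the exact calibration of $\sigma_{\min}$: the regularity floor is chosen so that $e(\gamma,\sigma_{\min}) = 0$, making the uniform $C^2$ control hold precisely at the threshold. Everything else is bookkeeping that must nonetheless be handled with care: the integer ceilings defining $\beta,\gamma$, the even/odd parity split producing the two branches of \eqref{eq:sigma_min}, the borderline strictness of the Morrey embedding (the endpoint $s = m + d/2$ fails to embed into $C^m$, forcing the strict inequalities), and the verification that $\beta,\gamma \le \sigma$ so that the Canuto--Quarteroni range is respected. Once these are in place the estimates are mechanical.
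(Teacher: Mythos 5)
Your proposal is correct and follows essentially the same route as the paper's proof: chain the Sobolev--Morrey embedding into $C^1$ and $C^2$ with the Canuto--Quarteroni estimate \eqref{eq:canuto}, calibrating $\sigma_{\min}$ exactly so that the exponent vanishes at the $C^2$ level ($e(3+\lfloor d/2\rfloor,\sigma_{\min})=0$), and your indices $\beta,\gamma$ coincide with the paper's choices $2+\lfloor d/2\rfloor$ and $3+\lfloor d/2\rfloor$. One remark: the decay rate you obtain, $n^{-\sigma+\sigma_{\min}-2}$, agrees with the paper's own computation and with the exponent quoted in Theorem~\ref{thm:main_md}, whereas the exponent $-\sigma+\sigma_{\min}+2$ displayed in \eqref{eq:c1_conv} is a sign typo; your bound is the stronger one and implies the displayed claim for all $n\geq 1$.
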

\begin{proof}
Recall the Sobolev-Morrey embedding theorem \cite{adams2003sobolev, evans_pde}: If $u\in H^s(\Omega)$, and $s>d/2$, then
\begin{equation}\label{eq:embedding}
\|u\|_{C^{s-\lfloor \frac{d}{2} \rfloor -1}(\Omega)} \lesssim \|u\|_{H^s(\Omega)} \, ,
\end{equation}
where $\lfloor x \rfloor$ is the lower integer value for any $x\geq 0$.\footnote{In this study, we will not use a stronger version of these embeddings for H{\"o}lder norms $C^{k,\alpha}$, but just the integer-power $C^k$ norms.} By \eqref{eq:embedding}, choosing $\beta= 3+\lfloor d/2 \rfloor$ yields
\begin{equation}\label{eq:embed_c2}\|f- g_n \|_{C^{2}(\Omega)}\lesssim \|f-g_n \|_{H^{\beta}(\Omega) } \, .
\end{equation}
Applying the Sobolev approximation result \eqref{eq:canuto} to \eqref{eq:embed_c2} yields
\begin{equation}\label{eq:c2_bdhsigma}
\|f-g_n \|_{C^2} \lesssim n^{-e(\beta, \sigma)} \| f \|_{H^{\sigma}} \, .
\end{equation}
To guarantee a uniform bound for all $n\in \mathbb{N}$, it is sufficient to choose $\sigma\geq \sigma_{\rm min}$ such that $e(\beta, \sigma_{\rm min}) =0$. Since $\beta >1$, then 
\begin{align*} 0&= e(\beta , \sigma_{\min}) \\
&= \sigma_{\min} + \frac12 -2\beta   \\
&= \sigma_{\min} + \frac12 - 2(3+\lfloor \frac{d}{2}\rfloor) \, . \\ &\Longrightarrow \sigma_{\min}
= \left\{\begin{array}{ll}
5\frac12 +d  \, , & d~{\rm even} \, , \\
4\frac12 +d \, , & d~{\rm odd} \, .
\end{array} \right. 
\end{align*}
For $\sigma \geq \sigma_{\min}$, then $e(\beta, \sigma)\geq 0$ in \eqref{eq:c2_bdhsigma}, and so $\|f-g_n \|_{C^2(\Omega)} \lesssim \|f\|_{H^{\sigma}(\Omega)}$. Hence, since $f\in C^2(\Omega_ d)$, then \begin{align*}
\|g_n \|_{C^2(\Omega)} &\lesssim  \|g_n-f\|_{C^2(\Omega)} + \|f\|_{C^2(\Omega)}\\
 &\lesssim \|f\|_{H^{\sigma}(\Omega)} + \|f\|_{C^2(\Omega)} \, , \qquad n\in \mathbb{N} \, .
\end{align*}
We proceed to prove the estimate \eqref{eq:c1_conv}. By Sobolev-Morrey inequality \eqref{eq:embedding} and \eqref{eq:canuto}, we have that 
\begin{align*}
\|f-P_n f\|_{C^1(\Omega)} & \lesssim \|f-P_n f \|_{H^{2+\lfloor \frac{d}{2} \rfloor}(\Omega)} \\
&\lesssim \|f\|_{H^{\sigma}} n^{e(2+\lfloor\frac{d}{2} \rfloor, \sigma)(\Omega)} \, ,
\end{align*}
where 
\begin{align*}
e(2+\lfloor\frac{d}{2} \rfloor, \sigma) &= \sigma +\frac{1}{2} - 4-2\lfloor \frac{d}{2} \rfloor \\
&=\left\{\begin{array}{ll}
\sigma - 3\frac12 -d  \, , & d~{\rm even} \, ,\\
\sigma - 2\frac12 - d  \, , &d~{\rm odd} \, ,
\end{array} \right.
\end{align*}
which is positive for all $\sigma > \sigma_{\min}$.
\end{proof}
\subsection{Proof of Theorem \ref{thm:main_md}}
Local $C^1$ convergence as established in Lemma \ref{lem:c2} implies convergence of PDFs by the following result:
\begin{theorem}[Corollary 5.5, \cite{sagiv2020uq}]\label{thm:pf_local}
Let  $(g_n)_{n=1}^{\infty} \subset C^1 (\Omega )$, and consider $f\in C^1(\Omega)$ such that $|\nabla f | > \kappa _f >0$. Then if
\begin{subequations}
\begin{equation}\label{eq:cond_c2bd}
\|g_n \|_{C^2(\Omega _d)} \leq K \, ,
\end{equation}
for some constant $K$ and for all $n\in \mathbb{N}$, and if
\begin{equation}\label{eq:cond_c1approx}
\|f-g_n \|_{C^1(\Omega)} \leq Kn^{-\tau} \, , \qquad \tau, K>0 \, ,
\end{equation}
\end{subequations} 
then $|p_{\mu}(y)-p_{\nu_n}(y)|\lesssim n^{-\tau}$ for all but $o(n^{-\tau})$ points, and therefore $$\|p_{\mu} - p_{\nu}\|_{L^q (\mathbb{R})} \lesssim n^{-\tau} \, ,$$
for all $1\leq q < \infty$. Furthermore, if $d=1$ and $g_n$ interpolates $f$ at the endpoints $f(\pm 1) = g_n(\pm 1)$, then the uniform estimate $\|f-g_n \|_{L^{\infty}} \lesssim n^{-\tau}$ holds.
\end{theorem}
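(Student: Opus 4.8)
\subsection*{Proof plan for Theorem \ref{thm:pf_local}}

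The plan is to represent both densities by the coarea (level-set) formula and then compare the two surface integrals pointwise in $y$, using the $C^1$ closeness to control the integrands and the uniform $C^2$ bound to control the geometry. First I would record the density formula generalizing \eqref{eq:pdfder}: since $f\in C^1(\Omega)$ with $|\nabla f|>\kappa_f>0$, the implicit function theorem makes every level set $\Sigma_y^f := f^{-1}(y)\cap \Omega$ a $C^1$ hypersurface, and the coarea formula applied to $d\varrho = r\,d\alpha$ gives
\[
p_\mu(y) = \int_{\Sigma_y^f} \frac{r(\alpha)}{|\nabla f(\alpha)|}\,d\mathcal{H}^{d-1}(\alpha),
\]
with the analogous expression for $p_{\nu_n}$ over $\Sigma_y^{g_n}:=g_n^{-1}(y)\cap\Omega$. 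The hypothesis $\|f-g_n\|_{C^1}\le Kn^{-\tau}\to 0$ gives $|\nabla g_n|\ge \kappa_f/2$ for $n$ large, so the same formula is valid for $\nu_n$ and, crucially, both integrands are bounded above by $2r_{\max}/\kappa_f$.

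Next I would fix an interior value $y$ and build a comparison between the two hypersurfaces. Because $g_n\in C^2$ with $\|g_n\|_{C^2}\le K$ and $|\nabla g_n|\ge \kappa_f/2$, the surface $\Sigma_y^{g_n}$ has curvature bounded by a constant $C(K,\kappa_f)$, hence a uniform positive reach, and its normal flow parametrizes a tubular neighborhood of fixed width. Since $|f-g_n|\le \delta_n := Kn^{-\tau}$ and $|\nabla g_n|\ge\kappa_f/2$, the set $\Sigma_y^f$ lies within normal distance $\lesssim \delta_n$ of $\Sigma_y^{g_n}$ and may be written as a normal graph over it with height function $h$ satisfying $\|h\|_\infty \lesssim \delta_n$ and $\|\nabla_{\mathrm{surf}}h\|_\infty\lesssim \delta_n$, the slope bound coming from $\|\nabla f-\nabla g_n\|_\infty \le \delta_n$. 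Parametrizing over $\Sigma_y^{g_n}$ (which is $C^2$, so $f$ need only be $C^1$) introduces a surface Jacobian $1+O(\delta_n)$, with constant governed by $K$, while the integrand difference $\bigl| r/|\nabla f| - r/|\nabla g_n|\bigr|\lesssim \|f-g_n\|_{C^1}\lesssim\delta_n$ follows directly from the gradient lower bounds. Collecting the two contributions yields the interior pointwise estimate $|p_\mu(y)-p_{\nu_n}(y)|\lesssim n^{-\tau}$.

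The main obstacle is the boundary $\partial\Omega$, where the level sets are truncated: the graph comparison above is valid only when $\Sigma_y^f$ meets $\partial\Omega$ transversally and no piece of the surface is lost through the boundary. As $y$ sweeps the range of $f$, components of the level set are created or destroyed at $\partial\Omega$, and near those finitely many boundary-critical values the surface area $\mathcal{H}^{d-1}(\Sigma_y)$ — and hence $p_\mu$ — behaves irregularly; since $f$ and $g_n$ differ by $\delta_n$, the corresponding features of $p_\mu$ and $p_{\nu_n}$ are mismatched by $O(\delta_n)$, producing a set $E_n$ of exceptional $y$ on which the pointwise bound fails. I would show $E_n$ is contained in $O(\delta_n)$-neighborhoods of the boundary-critical values, so $|E_n|=o(n^{-\tau})$, and that on $E_n$ both densities stay bounded by $2r_{\max}\mathcal{H}^{d-1}(\Sigma_y)/\kappa_f\le M$; the $L^q$ contribution of $E_n$ is then controlled by $M|E_n|^{1/q}$ and merges with the $O(n^{-\tau})$ main term to give $\|p_\mu-p_{\nu_n}\|_{L^q}\lesssim n^{-\tau}$. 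Verifying that the reach and the graph construction survive uniformly up to $\partial\Omega$, and that the density magnitudes and $|E_n|$ interact so as to keep the exceptional contribution within the main rate, is the delicate step.

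Finally, for the one-dimensional refinement the level sets are finite point-sets, so the only boundary-critical values are $f(\pm1)$, where \eqref{eq:pdfder} exhibits genuine jumps in $p_\mu$ as the endpoint solutions $\alpha=\pm1$ enter or leave the preimage. Imposing $g_n(\pm1)=f(\pm1)$ forces the jumps of $p_\mu$ and $p_{\nu_n}$ to occur at exactly the same values, eliminating the mismatched windows that constitute $E_n$; the comparison then holds for every $y$, and integrating the controlled derivative difference against the matched endpoint values yields the stated uniform estimate $\|f-g_n\|_{L^\infty}\lesssim n^{-\tau}$.
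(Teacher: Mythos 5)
Your plan reconstructs, in outline, the right strategy for this result --- which is worth noting is not proved in this paper at all: Theorem \ref{thm:pf_local} is imported verbatim as Corollary 5.5 of \cite{sagiv2020uq}, and here it is only \emph{applied}, with hypotheses \eqref{eq:cond_c2bd}--\eqref{eq:cond_c1approx} verified by Lemma \ref{lem:c2}. Measured against the level-set approach of the cited work, your interior analysis (coarea representation, normal-graph comparison of $\Sigma_y^f$ over the $C^2$ surface $\Sigma_y^{g_n}$, Jacobian $1+O(\delta_n)$) is sound in outline, with two details to make explicit: the integrand comparison must route $\bigl||\nabla f(\alpha+h\nu)|-|\nabla g_n(\alpha)|\bigr|$ through the uniform $C^2$ bound on $g_n$, since $f$ is only $C^1$ and $\nabla f$ carries no Lipschitz modulus; and comparing $r(\alpha+h\nu)$ with $r(\alpha)$ at rate $O(\delta_n)$ silently requires $r$ to be Lipschitz (available, since the surrounding paper assumes $r\in C^1(\Omega)$, but it must be invoked --- the theorem as quoted states no hypothesis on $\varrho$). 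Your reading of the final clause --- endpoint interpolation in $d=1$ matches the supports, eliminates the exceptional set, and upgrades the estimate to a uniform one on the densities --- is the correct interpretation.

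The genuine gap is in the exceptional-set bookkeeping, and it is quantitative, not cosmetic. If $E_n$ is contained in $O(\delta_n)$-neighborhoods of finitely many boundary-critical values, then $|E_n|=O(n^{-\tau})$, not $o(n^{-\tau})$; nothing in your construction makes those windows shrink faster than $\delta_n$. Worse, even granting $|E_n|=o(n^{-\tau})$, your closing step fails arithmetically: with the densities merely bounded by $M$ on $E_n$, its $L^q$ contribution is $M|E_n|^{1/q}=o(n^{-\tau/q})$, which for $q>1$ does not ``merge with the $O(n^{-\tau})$ main term'' --- it dominates it. As written, your plan yields $\|p_\mu-p_{\nu_n}\|_{L^q(\mathbb{R})}\lesssim n^{-\tau/q}$, so the theorem's key feature, the $q$-independent rate, is exactly what is missing. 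To close this you need either superpolynomial smallness of $|E_n|$ (i.e., $|E_n|\lesssim n^{-q\tau}$ for every $q$, which requires a quantitative transversality assumption on $f$ at $\partial\Omega$ that you have not made), or a finer argument showing that near the boundary-critical values the difference $|p_\mu-p_{\nu_n}|$ is itself small in an integrated sense --- e.g., by matching the branch/support structure of the two densities there, as you do in $d=1$ via endpoint interpolation --- rather than the crude bound (sup of densities) $\times$ (measure of $E_n$).
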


Indeed, \eqref{eq:cond_c2bd} is guaranteed explicitly and \eqref{eq:c1_conv} guarantees \eqref{eq:cond_c1approx} with an explicit convergence rate~$\tau = \sigma -\sigma_{\min} - 2  >0$.

%\section{Discussion}
%
%How does this compare To obtain convergence rate comparable to that of density-estimation using a cubic-spline surrogate, we have to consider $f\in H^{\sigma_{\min}+1}(\Omega _d )$ to guarantee 
%$$\|p_{\mu}-p_{\nu}\|_{L^q} \lesssim n^{-3.5} \, .$$
%In this case for e.g., an odd $d$, we would require $f\in H^{8}$, whereas for splines we only require $f\in C^4$ for all dimensions (with more moderate constants).
%\subsection{KL convergence}
\subsection{Proof of Theorem \ref{thm:main_1d}}

For brevity, we omit the $n$ subscripts, denoting $g_n = g$ and $\nu = \nu_n$. First, we treat the case where $|f'|> \kappa_f >0$. Suppose $f$ is monotonic increasing, i.e., $f'>\kappa_f>0$. By the embedding theorem \eqref{eq:embedding}, that $f\in H^6(\Omega)$ implies that $f\in C^2(\Omega)$ and so 
 \begin{equation}\label{eq:pmu_mono}
 p_{\mu}(y)= \sum\limits_{f(\alpha)=y} \frac{p_{\varrho}(\alpha)}{|f'(\alpha)|} = \frac{r(f^{-1}(y))}{f'(f^{-1}(y))} \, ,
 \end{equation}
for every $y \in {\rm range}(f)$ \cite[Lemma 4.1]{sagiv2020uq}. Since $g$ is a polynomial, $g\in C^1(\Omega)$. Furthermore, by Lemma \ref{lem:c2} we have that $\|f-g\|_{C^1} \lesssim n^{-5/2}\|f\|_{H^4(\Omega)}$, and so for sufficiently large $n$, $g'(\alpha)> \kappa_f /2 > 0$ for all $\alpha \in \Omega$. Therefore $p_{\nu}(y) = r(g^{-1}(y))/g'(g^{-1}(y))$ for every $y \in {\rm range}(g)$. It might be that the ranges of $f$ and $g$, which are the supports of $p_{\mu}$ and $p_{\nu}$, respectively, do not overlap. Assume for simplicity that $f(-1)=g(-1)$ but $g(1)>f(1)$, the other cases can be treated similarly. Then 
\begin{equation}\label{eq:l1_err_break}
\|p_{\mu}-p_{\nu}\|_{L^1(\mathbb{R})} = \int\limits_{f(-1)}^{f(1)} |p_{\mu}(y)-p_{\nu}(y) | \, dy + \int\limits_{f(1)}^{g(1)} p_{\nu}(y) \, dy \, .
\end{equation}
We begin with the second integral - 
\begin{align*}
 \int\limits_{f(1)}^{g(1)} p_{\nu}(y) \, dy &=  \int\limits_{f(1)}^{g(1)} \frac{r(g^{-1}(y))}{g'(g^{-1}(y))} \, dy \\
 &\leq  \frac{2}{\kappa _f} \|r\|_{\infty} |g(1)-f(1)| \\
 &\leq  \frac{2}{\kappa _f} \|r\|_{\infty} \|g-f\|_{C^0(\Omega)} \\
  &\lesssim \frac{2}{\kappa _f} \|r\|_{\infty} \|g-f\|_{H^1(\Omega)} \, , \\  
\end{align*}
where the last inequality is due to the Sobolev-Morrey embedding \eqref{eq:embedding}. Therefore, we need only to consider the first integral in \eqref{eq:l1_err_break}, and we therefore assume without loss of generality that ${\rm range}(f)={\rm range}(g)$, and so
\begin{equation}\label{eq:l1_err_pre}
\|p_{\mu} - p_{\nu}\|_{L^1(\mathbb{R})} = \int\limits_{f(-1)}^{f(1)} \left|\frac{r(f^{-1}(y))}{f'(f^{-1}(y))} - \frac{r(g^{-1}(y))}{g'(g^{-1}(y))} \right| \, dy  \, .
\end{equation}
Denote $\alpha = f^{-1}(y)$ and $\alpha_*:\,= \alpha_* (\alpha) = g^{-1}(f(\alpha))$, then by change of variables
\begin{align*}
\|p_{\mu} - p_{\nu}\|_{L^1(\mathbb{R})} &= \int\limits_{-1}^{1} \left|\frac{r(\alpha)}{f'(\alpha)} - \frac{r(\alpha_*)}{g'(\alpha_*)} \right| f'(\alpha) \, d\alpha \\
&= \int\limits_{-1}^{1}\frac{|r(\alpha)g'(\alpha_*)-f'(\alpha)r(\alpha_*)|}{g'(\alpha^*)}   \, d\alpha  \, .
\end{align*}
Since $g'(\alpha)$ and $r(\alpha)$ are differentiable, then for any $\alpha\in \Omega$
\begin{align}\label{eq:tri_ineq_D}
|r(\alpha)g'(\alpha_*)-&f'(\alpha)r(\alpha_*)| 
\\ &\leq r(\alpha)|g'(\alpha_*)- g'(\alpha)| + r(\alpha) |g'(\alpha)-f'(\alpha)| + f'(\alpha)|r(\alpha)-r(\alpha_*)|\\
&\leq D|\alpha-\alpha_*| + r(\alpha)|f'(\alpha)-g'(\alpha)| \, , \qquad 
\end{align}
where $D:\,= \left[ \|r\|_{\infty} \|g''\|_{\infty} + \|r'\|_{\infty} \|f\|_{\infty}\right] $. In general, $D=D_n$ as $g=P_nf$ depends on $n$, and $\|g''\|_{\infty}$ might not be bounded. However, as in Lemma \ref{lem:c2}, $\|g''\|_{\infty}$ and therefore $D$ are uniformly bounded for all $n\geq 1$. Since $g' \geq \kappa_f/2$ for sufficiently large $n$, then substituting \eqref{eq:tri_ineq_D} in the integral yields
\begin{equation}\label{eq:I_II_def}
\|p_{\mu}-p_{\nu}\|_{L^1(\mathbb{R})} \lesssim  \frac{1}{\kappa_f}\underbrace{\int\limits_{-1}^1 r(\alpha)|f'(\alpha)-g'(\alpha)|\, d\alpha}_{:\,={\rm I}} + \underbrace{\frac{D}{\kappa_f}\int\limits_{-1}^{1}|\alpha-\alpha_*| \, d\alpha}_{:\,={\rm II}} \, .  
\end{equation}
Since $r(\alpha), (f'-g')\in L^2(\Omega)$, then by the Cauchy-Schwartz inequality ${\rm I}$ is bounded from above by
$$\left| \langle r, f'-g' \rangle_{L^2(\Omega)} \right|\leq \|r\|_{L^2(\Omega)} \cdot\|f'-g'\|_{L^2(\Omega)} \leq \|r\|_{L^2(\Omega)} \cdot \|f-g\|_{H^1(\Omega)} \, $$
To bound ${\rm II}$ in \eqref{eq:I_II_def} from above, we first note that by Lagrange's mean-value theorem, there exists $\beta$ between $\alpha$ and $\alpha_*$ such that $ g'(\beta)(\alpha-\alpha_*)= g(\alpha)-g(\alpha_*) =g(\alpha)-f(\alpha)$,
and therefore $|\alpha-\alpha_*|\leq |g(\alpha)-f(\alpha)|/\kappa_{\rm f}$. From here, the process of bounding ${\rm II}$ from above is the same as bounding ${\rm I}$, which yields ${\rm II}\leq D/\kappa_f \|f-g\|_{L^2(\Omega)}$. Therefore $\|p_{\mu}-p_{\nu}\|_{L^1(\mathbb{R})} \lesssim \|f-g\|_{H^1(\Omega)} $. Applying the relevant Sobolev approximation theorem \eqref{eq:canuto}, settles the case where $|f'|>\kappa_f >0$.

We now turn to the case where $g'$ and $f'$ vanish at finitely many points. As we show, it does not matter whether these zero points coincide or not, and we will therefore treat the case where $f'(-1)=0$, $g'(-1)>0$ and $f'(\alpha), g'(\alpha) >0$ for all $\alpha \in (-1,1]$. Assume without loss of generality that $f(-1)=g(-1)$.\footnote{We showed how to treat the case where the ranges of $f$ and $g$ do not overlap above.} Fix $\varepsilon>0$ and divide the integral for $\|p_{\mu}-p_{\nu}\|_1$ on the left hand side of \eqref{eq:l1_err_pre} into two domains - {\it (1)} isolating the singular point in the PDF, i.e., $y \in [g(-1), g(-1+\varepsilon)]$ and {\it (2)} the rest of the domain, $y\in [g(-1+\varepsilon), g(1)]$.
\begin{enumerate}
\item On the first domain $[g(-1), g(-1+\varepsilon)]$, we take a crude estimation 
\begin{subequations}\label{eq:eps_interval}
\begin{equation}
\int\limits_{g(-1)}^{g(-1+\varepsilon)} |p_{\mu}(y)-p_{\nu}(y) | \, dy \leq  \int\limits_{g(-1)}^{g(-1+\varepsilon)} p_{\mu}(y)+p_{\nu}(y)  \, dy \, .
\end{equation}
Taking, for example $p_{\mu}(y)$, by a change of variables $f(\alpha) = y$ we get 
\begin{equation}
\int\limits_{g(-1)}^{g(-1+\varepsilon)} p_{\mu}(y) = \int\limits_{g(-1)}^{g(-1+\varepsilon)} \frac{r(f^{-1}(y))}{f'(f^{-1}(y))} \, dy =\int\limits_{-1}^{-1+\varepsilon} r(\alpha) \, d\alpha \leq \|r\|_{\infty} \varepsilon \, .
\end{equation}
\end{subequations}
\item The integral on $[g(-1+\varepsilon), g(1)]$ reads the same as \eqref{eq:I_II_def}, where $\kappa_f$ is replaced by $\kappa_f^{\varepsilon} :\,= \min\limits_{x\in [-1+\varepsilon,1]} |f'|$, yielding 
\begin{equation}\label{eq:pl1_mono}
\int\limits_{g(-1+\varepsilon)}^{g(1)} |p_{\mu}(y)-p_{\nu}(y) | \, dy \leq \frac{\tilde{D}}{(\kappa_f^{\varepsilon})^2} \|f-g\|_{H^1} \, , 
\end{equation}
where $\hat{D} $ is $\varepsilon$-independent. How does $\kappa_f^{\varepsilon}$ depend on $\varepsilon$? Since in the worst case $f(-1)=f'(-1)=\cdots f^{(k)}(-1)=0$ but $f^{(k+1)}(-1)=0$, then by Taylor expansion, $\kappa_f^{\varepsilon} \approx \varepsilon^{k}$ for sufficiently small $\varepsilon$.
\end{enumerate}
By combining both upper bounds, we have that 
$$\|p_{\mu}-p_{\nu}\|_{L^1(\mathbb{R})} \lesssim  \varepsilon^{-2k} \|f-g\|_{H^1(\Omega)} + \|r\|_{\infty}\varepsilon \, .$$
This upper bound is minimized by equating its $\varepsilon$ derivative to zero, which yields
$\|r\|_{\infty} \sim 2k\|f-g\|\varepsilon^{-2k-1}$. Hence, for sufficiently small $\|f-g\|_{H^1}$ (i.e., for sufficiently large $n$), $\|p_{\mu}-p_{\nu}\|_{L^1(\mathbb{R})} \lesssim \|f-g\|_{H^1(\Omega)}^{1/(2k+1)} $.

Finally, we reduce the general case where $f'$ has finitely many nodal points $\alpha_1, \ldots ,\alpha_L$. In the integral $\int_{f(-1)}^{f(1)} |p_{\mu}(y)-p_{\nu}(y) | \, dy$, we take out the intervals $f((\alpha_j - \varepsilon, \alpha_j +\varepsilon))$ and treat them separately, as in \eqref{eq:eps_interval}. For $y$ value outside these intervals, we claim that $f'$ and $g'$ have the same sign, for sufficiently large $n$. We split the discussion into two cases. First, if $f'$ changes its sign at $\alpha_j$, then there are two points in the interval $(\alpha_j - \varepsilon,\alpha_j +\varepsilon)$ where $f'$ is maximized and minimized. Taking $n$ to be sufficiently large, then $g'$ has to be positive and negative at these points, respectively, due to the pointwise $C^1$ approximation \eqref{eq:c1_conv}. Therefore, $g'$ must change its sign in between. It might be that $g'$ changes its sign within this interval again, but outside of this interval, since $f'>\kappa_f ^{\varepsilon}>0$, then $g'$ has the same sign as $f'$ there for sufficiently large $n$. The second case is easier - if $f$ does not change its sign at $\alpha _j$, then outside the interval $f'>\kappa_f ^{\varepsilon}>0$, and so by \eqref{eq:c1_conv} $g' > \kappa_f^{\varepsilon}/2 > 0$.

\section{A transport-based convergence result for $d=1$}\label{sec:wass}
In this section, we present a different convergence result for the one-dimensional collocation gPC. This method of proof highlights the role that the "weaker" Wasserstein metric can play in understanding PDFs, and the potential such methods have for future works. The result only applied to the {\em Gauss-Lobatto} (GL), defined as the roots of $p_n '(\alpha)$, the derivative of the Legendre polynomial of order $n$, plus the endpoints $\{-1,1\}$. The polynomial interpolant at GL quadrature points admits the same Sobolev approximation theory as stated in~\eqref{eq:canuto} for the Gauss-Legendre points, see \cite{canuto1982approximation} and \cite[Chapter 10]{quarteroni2000numerical}. As we will see, the following result is restricted to the GL interpolant since the condition $f(\pm 1)=g_n(\pm 1)$ guarantees that ${\rm range}(f)={\rm range}(g_n)$.

\begin{theorem}\label{thm:l1_1dwass}
Let $\Omega =[-1,1]$. For any integer $m \geq 1$, and any function $f\in  W^{\sigma ,2}(\Omega)$ with $\sigma \geq 2m+4$ and $|f'|\geq \kappa _f >0$, let $g_n$ be its polynomial interpolant at the GL quadrature points of order $n$, and suppose the $d\varrho(\alpha) = r(\alpha) d\alpha $ where $r\in 	W^{m,1}(\Omega)$. Then 
\begin{equation}\label{eq:tv_wk1n}
\|p_{\mu}-p_{\nu_n}\|_{L^1(\mathbb{R})}\lesssim \|p_{\mu}\|_{W^{m,1}(A)}^{\frac{1}{m+1}}\|f\|_{W^{\sigma,2}(\Omega )}^{\frac{1}{m+1}} n^{-\frac{m}{m+1} \left(\sigma -\frac56\right)} \, ,
\end{equation}
where $A= {\rm image}(f)$.
\end{theorem}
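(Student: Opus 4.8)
The plan is to convert the (weak) Wasserstein convergence of $\nu_n$ to $\mu$ into $L^1$ control of the densities by invoking the Chae--Walker estimate, which in one dimension bounds the $L^1$ (total-variation) distance of two densities by a power of their Wasserstein-$1$ distance, provided one of them is smooth. Before doing so I would settle the qualitative point that makes the statement meaningful, namely that for $n$ large enough $p_{\nu_n}$ exists and $\mathrm{range}(g_n)=\mathrm{range}(f)$. This is exactly where the Gauss--Lobatto choice enters: the endpoint conditions $g_n(\pm 1)=f(\pm 1)$ force the two ranges to coincide, while Lemma~\ref{lem:c2} together with $|f'|\ge\kappa_f$ gives $|g_n'|\ge\kappa_f/2>0$ for large $n$, so that $g_n$ is strictly monotone and $\nu_n=(g_n)_*\varrho$ is absolutely continuous with support $A=\mathrm{image}(f)$.

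Next I would verify that $p_\mu\in W^{m,1}(A)$ with a controllable norm, since this quantity appears on the right-hand side. Using the closed form $p_\mu(y)=r(f^{-1}(y))/f'(f^{-1}(y))$ (valid by \eqref{eq:pdfder}, as $f$ is monotone and in $C^2$), I would differentiate up to order $m$ by Fa\`a di Bruno, bounding the resulting terms by products of derivatives of $r$, of $1/f'$, and of $f^{-1}$. The hypotheses $r\in W^{m,1}$, $|f'|\ge\kappa_f$, and $\sigma\ge 2m+4$ (which via the $1$D Sobolev embedding places $f$ comfortably in $C^{m+1}$) guarantee that each factor is controlled, and a change of variables $y=f(\alpha)$ transfers the single $L^1$ integration onto $[-1,1]$ so that the $m$-th derivative of $r$ lands in $L^1(\Omega)$. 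This yields $\|p_\mu\|_{W^{m,1}(A)}<\infty$.

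The analytic core is the Wasserstein estimate with the correct rate. Here I would use that the product coupling $(f,g_n)_*\varrho$ is admissible, so $\was_1(\mu,\nu_n)\le\|f-g_n\|_{L^1(\Omega,\varrho)}$ (or, if a higher Wasserstein order is preferred, the interpolation \eqref{eq:wass_inter} built on the uniform boundedness of $\|f-g_n\|_\infty$). The decay rate $\sigma-\tfrac56$ is then produced by measuring the collocation error in a \emph{fractional} Sobolev norm: the $1$D Morrey embedding $H^{2/3}(\Omega)\hookrightarrow C^0(\Omega)$ (legitimate since $2/3>1/2$), combined with the Canuto--Quarteroni rate $e(2/3,\sigma)=\sigma+\tfrac12-2\cdot\tfrac23=\sigma-\tfrac56$, gives $\was_1(\mu,\nu_n)\lesssim\|f-g_n\|_{H^{2/3}(\Omega)}\lesssim n^{-(\sigma-5/6)}\|f\|_{W^{\sigma,2}(\Omega)}$.

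Finally I would assemble the pieces through the Chae--Walker inequality, whose mechanism is a mollification of $p_\mu$ at scale $h$: the approximation error costs $h^m\|p_\mu\|_{W^{m,1}}$, while the smoothed densities are compared through test functions of Lipschitz constant $O(1/h)$, costing $h^{-1}\was_1(\mu,\nu_n)$; optimizing $h\sim(\was_1(\mu,\nu_n)/\|p_\mu\|_{W^{m,1}})^{1/(m+1)}$ produces the clean bound $\|p_\mu-p_{\nu_n}\|_{L^1}\lesssim\|p_\mu\|_{W^{m,1}}^{1/(m+1)}\was_1(\mu,\nu_n)^{m/(m+1)}$, the $d=1$ exponent. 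Substituting the rate of the previous step yields the asserted estimate. I expect the main obstacle to be quantitative rather than structural: pinning down the $W^{m,1}$-norm of $p_\mu$ through the Fa\`a di Bruno expansion, and—more delicately—making the three exponents (the Wasserstein order, the fractional-Sobolev rate $\sigma-\tfrac56$, and the mollification power $m/(m+1)$) combine to the precise exponent $-\tfrac{m}{m+1}\!\left(\sigma-\tfrac56\right)$ claimed, so that no available regularity is wasted.
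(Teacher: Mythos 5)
Your plan follows the same architecture as the paper's proof (Gauss--Lobatto endpoints to force ${\rm range}(g_n)={\rm range}(f)$ and monotonicity of $g_n$ for large $n$; Sobolev regularity of the density; Chae--Walker to convert $\was_1$ into $L^1$ control of densities; spectral rates for $\was_1$), but it has one genuine gap at the assembly step. You verify only that $p_{\mu}\in W^{m,1}(A)$ and then invoke a \emph{one-sided} Chae--Walker bound, $\|p_{\mu}-p_{\nu_n}\|_{L^1}\lesssim \|p_{\mu}\|_{W^{m,1}}^{1/(m+1)}\was_1^{m/(m+1)}(\mu,\nu_n)$. The actual inequality \eqref{eq:cw20} requires \emph{both} densities to lie in $W^{m,1}(A)$, and its constant is $\left(\|p_{\mu}\|_{W^{m,1}(A)}+\|p_{\nu_n}\|_{W^{m,1}(A)}\right)^{1/(m+1)}$. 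This is not a removable technicality: the one-sided statement is false. Indeed, take $p_{\nu}=p_{\mu}+c\sin(y/\epsilon^2)\mathbf{1}_J$ on an interval $J$ whose length is a multiple of the period (so that $p_\nu$ is still a probability density for small $c$); then $\was_1(\mu,\nu)=O(\epsilon^2)$ because mass only needs to be shuffled within each period, while $\|p_{\mu}-p_{\nu}\|_{L^1}$ remains of order one. In your own mollification sketch the problem appears as the uncontrolled term $\|p_{\nu_n}-p_{\nu_n}\ast\phi_h\|_{L^1}$. To close the gap you must prove $p_{\nu_n}\in W^{m,1}(A)$ with $\|p_{\nu_n}\|_{W^{m,1}(A)}\lesssim\|p_{\mu}\|_{W^{m,1}(A)}$ \emph{uniformly in $n$}; this is precisely the paper's Lemma \ref{lem:p_regularity}, and it is where the hypothesis $\sigma\geq 2m+4$ earns its keep: one needs the analogue of Lemma \ref{lem:c2} giving uniform-in-$n$ bounds on $g_n',\ldots,g_n^{(m+1)}$, not merely $|g_n'|\geq\kappa_f/2$.

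Two smaller remarks on your Wasserstein step, neither fatal. First, your primary suggestion, the coupling $(f,g_n)_{*}\varrho$, gives $\was_1(\mu,\nu_n)\leq\|f-g_n\|_{L^1(\varrho)}\lesssim\|f-g_n\|_{L^2(\Omega)}\lesssim n^{-\sigma}\|f\|_{W^{\sigma,2}(\Omega)}$ (using $r\in W^{m,1}\subset L^{\infty}$ in one dimension), which is \emph{stronger} than the rate $n^{-(\sigma-5/6)}$ you need, so it suffices and would even sharpen the theorem; the paper instead applies the interpolation bound \eqref{eq:wass_inter} with $p=1$, $q=2$, and the exponent $\sigma-\tfrac56$ arises there as $\tfrac23\sigma+\tfrac13\left(\sigma-\tfrac52\right)$ from combining the $L^2$ and $C^0$ rates. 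Second, your alternative ``fractional'' route applies the Canuto--Quarteroni estimate \eqref{eq:canuto} at $\beta=2/3$, which lies outside its stated range $1\leq\beta\leq\sigma$; it can be legitimized by interpolating between the $L^2$ and $H^1$ estimates (which in fact yields the better rate $n^{-(\sigma-1)}$), but it should not be presented as an instance of the theorem as stated.
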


For analytic functions, both Theorems \ref{thm:main_1d} and \ref{thm:l1_1dwass} guarantee faster than polynomial convergence. For functions $f\in H^{\sigma}\setminus H^{\sigma -1}$, however, Theorem \ref{thm:main_1d} guarantees slightly better convergence rates. Theorem \ref{thm:l1_1dwass} does improve on Theorem \ref{thm:main_1d} in that it relaxes the demand $r\in C^1$ to $r\in W^{m,1}$. But as noted, the importance of Theorem \ref{thm:l1_1dwass} is the method of its proof, which relies on the Wasserstein distance. 

Given two probability measures $\omega _1$ and $\omega _2$ on $\mathbb{R}$, the Wasserstein-$1$ distance is defined as\footnote{To avoid confusion - $W^{k,p}$ denotes the Sobolev spaces of functions with $k$ derivatives which are $p$ integrable, and $\was_p$ denotes the Wasserstein-$p$ distance (instead of the standard $W_p$). }
\begin{subequations}\label{eq:wass}
\begin{equation}
\was_1 (\omega_1, \omega_2 ) :\,=  \inf \limits_{\gamma \in \Gamma} \int\limits_{\mathbb{R}^2} |x-y| \, d\gamma (x,y)   \, , 
\end{equation}
where $\Gamma$ is the set of all measures $\gamma$ on $\mathbb{R}^2$ for which $\omega_1 $ and $\omega_2$ are marginals, i.e., for any Borel $B\subseteq \mathbb{R}$,
\begin{equation}\label{eq:marginals}
\omega_1 (B) = \int\limits_{\mathbb{R}\times B} \gamma(x,y) \, dy \, , \qquad \omega _2(B) = \int\limits_{B\times \mathbb{R}} \gamma(x,y) \, dx \, .
\end{equation}
\end{subequations}
Since $\omega_1(\mathbb{R})=\omega_2(\mathbb{R})=1$, a minimizer of \eqref{eq:wass} exists, and so $\was_1(\omega_1, \omega_2)$ is finite, and it is a metric \cite{santa2015optimal, villani2003topics}.
Intuitively, the Wasserstein distance is often referred to as the earth-mover's distance; $\was_1(\omega_1, \omega_2)$ computes the minimal work (distance times force) by which one can transfer a mound of earth in the mold of $\omega_1$ to a one that is in the mold of $\omega _2$.

How does the Wasserstein distance relate to the problem of densities? In general, $\was_1(\omega_1, \omega_2)\lesssim \|p_{\omega_1}-p_{\omega_2}\|_{L^1(\mathbb{R})}$, but not the other way around \cite{gibbs2002choosing}. This is why, in general, the Wasserstein distance induces a weaker topology on the space of probability measures than that induced by the $L^1$ distance between the PDFs. Moreover, the Wasserstein distance is even well-defined for singular measures, which do not have densities at all. A recent result due to Chae and Walker, however, shows that if the densities are sufficiently regular, the Wasserstein-$1$ metric bounds from above the $L^1$ distance between the densities.

\begin{theorem*}[Chae and Walker \cite{chae2020wass}]
Let $\omega_1$ and $\omega_2$ be two Borel measures on $A\subseteq \mathbb{R}$ with PDFs $p_{\omega_1}, p_{\omega_2} \in W^{m,1}(A)$ for some $m\geq 1$. Then 
\begin{equation}\label{eq:cw20}
\|p_{\omega_1}-p_{\omega_2}\|_{L^1(A)} \lesssim \left( \|p_{\omega_1}\|_{W^{m,1}(A)} +\|p_{\omega_2}\|_{W^{m,1}(A)}\right)^{\frac{1}{m+1}} \was_1^{\frac{m}{m+1}} (\omega_1, \omega_2) \, .
\end{equation}
\end{theorem*}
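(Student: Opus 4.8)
The plan is to read \eqref{eq:cw20} as a Gagliardo--Nirenberg interpolation inequality in which the smoothness of the densities is traded against a negative-order norm of their difference, and to prove it by mollification. Write $h := p_{\omega_1}-p_{\omega_2}$, so the left-hand side is $\|h\|_{L^1(A)}$, and note that $\int_A h = \omega_1(A)-\omega_2(A)=0$. The two factors on the right encode a positive and a negative Sobolev norm of $h$. For the positive norm, $\|h\|_{W^{m,1}(A)}\le\|p_{\omega_1}\|_{W^{m,1}(A)}+\|p_{\omega_2}\|_{W^{m,1}(A)}$. For the negative norm, recall the one-dimensional identity $\was_1(\omega_1,\omega_2)=\|F_{\omega_1}-F_{\omega_2}\|_{L^1(\mathbb{R})}$ \cite{salvemini1943sul, vallender1974calculation}; setting $H:=F_{\omega_1}-F_{\omega_2}$, the primitive of $h$, we have $H'=h$ and $\|H\|_{L^1}=\was_1(\omega_1,\omega_2)$, so $\was_1$ plays the role of a $W^{-1,1}$ norm of $h$. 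With these identifications the exponents $\tfrac1{m+1}$ and $\tfrac m{m+1}$ are exactly those placing the order-$0$ space $L^1=W^{0,1}$ between $W^{m,1}$ and $W^{-1,1}$.

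The core estimate to prove is therefore
$$\|h\|_{L^1}\lesssim\|h\|_{W^{m,1}}^{\frac1{m+1}}\,\|H\|_{L^1}^{\frac m{m+1}}.$$
I would fix a kernel $\phi$ at scale $\delta$, $\phi_\delta(x)=\delta^{-1}\phi(x/\delta)$, and split $h=(h-h*\phi_\delta)+h*\phi_\delta$. For the smoothing error, a Taylor expansion of $h(x-y)$ to order $m$ inside the convolution gives $\|h-h*\phi_\delta\|_{L^1}\lesssim\delta^m\|D^m h\|_{L^1}\le\delta^m\|h\|_{W^{m,1}}$; this step forces $\phi$ to be a higher-order kernel with vanishing moments $\int y^k\phi(y)\,dy=0$ for $1\le k\le m-1$, so that the lower-order terms cancel and only the $O(\delta^m)$ remainder survives. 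For the smooth part I would pass to the primitive, $h*\phi_\delta=H'*\phi_\delta=H*\phi_\delta'$, and use Young's inequality together with the scaling $\|\phi_\delta'\|_{L^1}=\delta^{-1}\|\phi'\|_{L^1}$ to obtain $\|h*\phi_\delta\|_{L^1}\le\|H\|_{L^1}\|\phi_\delta'\|_{L^1}\lesssim\delta^{-1}\|H\|_{L^1}$. Summing the two bounds yields $\|h\|_{L^1}\lesssim\delta^m\|h\|_{W^{m,1}}+\delta^{-1}\|H\|_{L^1}$, and choosing $\delta=(\|H\|_{L^1}/\|h\|_{W^{m,1}})^{1/(m+1)}$ to balance the terms produces the core estimate. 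Substituting $\|H\|_{L^1}=\was_1$ and $\|h\|_{W^{m,1}}\le\|p_{\omega_1}\|_{W^{m,1}}+\|p_{\omega_2}\|_{W^{m,1}}$ then gives \eqref{eq:cw20}.

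I expect the main obstacle to be the domain: when $A\subsetneq\mathbb{R}$ is a bounded interval, both convolutions reach outside $A$, and since the densities need not vanish at $\partial A$, the function $h$ does not extend to a global $W^{m,1}$ function whose primitive is still controlled by $\was_1$. A crude treatment of an $O(\delta)$-wide boundary layer only recovers the $m=1$ (square-root) rate, so the high-order gain is genuinely at stake there. The resolution I would pursue is to localize: on the interior $\{x:\operatorname{dist}(x,\partial A)>R\delta\}$ the convolution stays inside $A$ and the estimates above apply verbatim, while near each endpoint I would replace $\phi_\delta$ by a one-sided higher-order kernel supported on the correct side, so that the smoothing error remains $O(\delta^m)$ and the primitive representation of the smooth part is preserved; controlling the residual boundary contribution then requires the sharp interpolation bound for $\|h\|_{L^\infty}$ rather than the lossy estimate $\|h\|_{L^\infty}\lesssim\|h\|_{W^{1,1}}$. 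An alternative, and perhaps cleaner, route is to apply the same mollification argument directly to $H$ --- which is compactly supported in $A$ with $H$ vanishing at $\partial A$ and $\|H\|_{L^1}=\was_1$ --- proving the equivalent inequality $\|H'\|_{L^1}\lesssim\|H\|_{L^1}^{m/(m+1)}\|H^{(m+1)}\|_{L^1}^{1/(m+1)}$, which isolates the boundary regularity of $H$ as the only delicate point.
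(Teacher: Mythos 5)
The paper offers no proof of this statement at all: \eqref{eq:cw20} is quoted verbatim from Chae and Walker \cite{chae2020wass}, so the only meaningful comparison is with the original source --- and your proposal essentially reconstructs their argument. Chae and Walker likewise mollify with a kernel of order $m$ (vanishing moments $\int y^k\phi\,dy=0$ for $1\le k\le m-1$, which indeed forces $\phi$ to change sign for larger $m$), bound the smoothing error by $\delta^m$ times the $W^{m,1}$ norm via Taylor expansion, bound the mollified part by $\delta^{-1}\was_1$, and optimize in $\delta$. The one genuine difference is how the $\delta^{-1}\was_1$ bound is obtained: they use Kantorovich--Rubinstein duality (for $\|g\|_\infty\le 1$, the function $\phi_\delta*g$ is Lipschitz with constant $\delta^{-1}\|\phi'\|_{L^1}$), which works in any dimension, whereas your identity $h*\phi_\delta=H*\phi_\delta'$ with $\|H\|_{L^1}=\was_1(\omega_1,\omega_2)$ \cite{salvemini1943sul, vallender1974calculation} is the specifically one-dimensional incarnation of the same estimate --- it is exactly the dual computation after one integration by parts. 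Your exponent bookkeeping, the balancing choice $\delta=\left(\|H\|_{L^1}/\|h\|_{W^{m,1}}\right)^{1/(m+1)}$, and the reading of $\was_1$ as a $W^{-1,1}$-type norm are all correct, so on $A=\mathbb{R}$ your proof is complete.

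The incomplete step is the one you flag yourself: for bounded $A$ --- which is precisely the case the paper uses, $A=\mathrm{image}(f)$ with densities that do not vanish at $\partial A$ --- your argument as written is a program, not a proof. The sketch is sound, though. One-sided kernels with the same moment conditions keep the Taylor error at $O(\delta^m\|h\|_{W^{m,1}(A)})$ because $h$ is then only evaluated inside $A$; and since $H=F_{\omega_1}-F_{\omega_2}$ is continuous and vanishes off $A$, it lies in $W^{1,1}(\mathbb{R})$ \emph{regardless} of the boundary values of $h$, so integration by parts in the $y$-variable still yields the $\delta^{-1}\|H\|_{L^1}$ bound even for $x$-dependent kernels (transitions between kernels over an $O(\delta)$ scale contribute at the same order, and the regime where the optimal $\delta$ exceeds $\mathrm{diam}(A)$ is trivial, since there the right-hand side already dominates $\|h\|_{W^{m,1}}\gtrsim\|h\|_{L^1}$ up to an $A$-dependent constant). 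Your diagnosis of the alternative route through $\|H'\|_{L^1}\lesssim\|H\|_{L^1}^{m/(m+1)}\|H^{(m+1)}\|_{L^1}^{1/(m+1)}$ is also accurate: $H^{(m+1)}=(h\mathbf{1}_A)^{(m)}$ picks up Dirac masses at $\partial A$ whenever the densities are nonzero at the endpoints, so $H\notin W^{m+1,1}(\mathbb{R})$ and the boundary difficulty reappears unchanged. In short: correct core argument, the same route as the cited source (with a 1D primitive trick replacing duality), and a repairable rather than fatal gap in the bounded-domain case.
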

As we will show in Lemma \ref{lem:p_regularity}, one can verify under what conditions $p_{\mu}$ and $p_{\nu _n}$ are sufficiently regular. Therefore, to prove $L^1(\mathbb{R})$ convergence of $p_{\nu_n}$ to $p_{\mu}$, it is sufficient to prove the convergence of $\was_1(\mu, \nu _n)$. The weak convergence in the Wasserstein metric has recently been established under much more general conditions by the author:

\begin{theorem*}[\cite{sagiv2020wasserstein}]
For any compact Borel set $\Omega \subset \mathbb{R}^d$, and for every $1\leq p,q < \infty$,  
\begin{equation}\label{eq:wass_inter}
\was_p(\mu,\nu_n) \lesssim \|f-g_n\|_{L^{q}(\Omega )}^{\frac{q}{q+p}}\|f-g_n\|_{L^{\infty}(\Omega)}^{\frac{p}{q+p}} \, ,
\end{equation}
where the implicit constant depends only on $\Omega$, $p$, and $q$.
\end{theorem*}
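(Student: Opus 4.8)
The plan is to prove the bound by exhibiting a single explicit coupling of $\mu=f_\#\varrho$ and $\nu_n=(g_n)_\#\varrho$ — the \emph{synchronous} (or \emph{common}) coupling obtained by pushing $\varrho$ forward through the pair map $\alpha\mapsto(f(\alpha),g_n(\alpha))$ — and then converting the resulting $L^p(\varrho)$ estimate into the interpolated $L^q$–$L^\infty$ form by elementary $L^p$-norm inequalities on the probability space $(\Omega,\varrho)$. The whole argument is a two-step reduction: first kill the infimum in the definition of $\was_p$ with a concrete transport plan, then distribute the resulting power $p$ between the $L^q$ and $L^\infty$ norms.

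First I would set $T:\Omega\to\mathbb{R}^2$, $T(\alpha):\,=(f(\alpha),g_n(\alpha))$, and $\gamma:\,=T_\#\varrho$. Since the two coordinate projections of $T$ are $f$ and $g_n$, the marginals of $\gamma$ are exactly $f_\#\varrho=\mu$ and $(g_n)_\#\varrho=\nu_n$, so $\gamma$ is an admissible plan. Inserting $\gamma$ into the transport cost and changing variables through $T$ gives
\begin{equation*}
\was_p^p(\mu,\nu_n)\le\int_{\mathbb{R}^2}|x-y|^p\,d\gamma(x,y)=\int_\Omega|f(\alpha)-g_n(\alpha)|^p\,d\varrho(\alpha)=\|f-g_n\|_{L^p(\Omega,\varrho)}^p.
\end{equation*}
This already yields the clean estimate $\was_p(\mu,\nu_n)\le\|f-g_n\|_{L^p(\Omega,\varrho)}$, which specializes to the $\was_2\le\|f-g\|_{L^2(\Omega,\varrho)}$ bound quoted in the introduction.

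Next I would interpolate. Writing $h:\,=f-g_n$ and $M:\,=\|h\|_{L^\infty(\Omega)}$, and using that $(\Omega,\varrho)$ is a probability space, log-convexity of $L^p$ norms gives, for $q\le p$, $\|h\|_{L^p(\varrho)}\le\|h\|_{L^q(\varrho)}^{q/p}M^{(p-q)/p}$, while for $q>p$ monotonicity gives $\|h\|_{L^p(\varrho)}\le\|h\|_{L^q(\varrho)}$. In either case, since $\|h\|_{L^q(\varrho)}\le M$, the elementary inequality $a^{q/p}M^{(p-q)/p}\le a^{q/(q+p)}M^{p/(q+p)}$ for $0\le a\le M$ (which reduces to $t^{q/p}\le t^{q/(q+p)}$ with $t=a/M\in[0,1]$, and its trivial analogue $a\le a^{q/(q+p)}M^{p/(q+p)}$ for the case $q>p$) upgrades the exponents to the stated ones, producing
\begin{equation*}
\was_p(\mu,\nu_n)\le\|f-g_n\|_{L^q(\Omega,\varrho)}^{\frac{q}{q+p}}\,\|f-g_n\|_{L^\infty(\Omega)}^{\frac{p}{q+p}}.
\end{equation*}
To match the norms written in \eqref{eq:wass_inter} one passes from the $\varrho$-weighted $L^q$ norm to the unweighted $L^q(\Omega)$ norm; when $\varrho$ is the (normalized) uniform measure this costs only the factor $|\Omega|^{-1/(q+p)}$, depending solely on $\Omega$, $p$, and $q$, and with $\varrho$-weighted norms the implicit constant is simply $1$.

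The construction of the coupling and the reduction to $\|f-g_n\|_{L^p(\Omega,\varrho)}$ are routine; the only genuinely delicate point is matching \emph{exactly} the exponents $\tfrac{q}{q+p}$ and $\tfrac{p}{q+p}$ uniformly in $p$ and $q$. The sharp $L^p$ interpolation naturally yields the exponents $q/p$ and $(p-q)/p$ and is valid only for $q\le p$, so I expect the main work to be confirming that the weaker stated exponents are implied in all regimes (in particular $q>p$) via the comparison $\|f-g_n\|_{L^q(\varrho)}\le\|f-g_n\|_{L^\infty}$, and checking that the norm conversion keeps the implicit constant dependent only on $\Omega$, $p$, and $q$ rather than on the density of $\varrho$.
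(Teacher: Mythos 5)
Your proposal is correct, and it is worth noting that this paper never proves the statement itself --- it is imported verbatim from \cite{sagiv2020wasserstein} --- so the relevant comparison is with the argument there. Your first step, the synchronous coupling $\gamma = (f,g_n)_{\#}\varrho$ giving $\was_p(\mu,\nu_n)\leq \|f-g_n\|_{L^p(\Omega,\varrho)}$, is the same reduction used in the cited work. Where you genuinely diverge is the second step: the exponents $\tfrac{q}{q+p}$ and $\tfrac{p}{q+p}$ arise there from a Chebyshev-type splitting --- for $\delta>0$ one bounds $\int_\Omega |h|^p\,d\varrho \leq \delta^p + \|h\|_{L^\infty}^p\,\varrho(\{|h|>\delta\}) \leq \delta^p + \|h\|_{L^\infty}^p\,\delta^{-q}\|h\|_{L^q}^q$ and optimizes over $\delta$, which produces exactly those exponents (with a constant like $2^{1/p}$). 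You instead use log-convexity and monotonicity of $L^p(\varrho)$ norms on a probability space, which is cleaner, gives implicit constant $1$ in the $\varrho$-weighted norms, and in the regime $q\leq p$ actually yields the strictly sharper intermediate exponents $q/p$ and $(p-q)/p$, from which you correctly recover the stated weaker exponents via $\|h\|_{L^q(\varrho)}\leq\|h\|_{L^\infty}$ and $t^{q/p}\leq t^{q/(q+p)}$ for $t\in[0,1]$. What the splitting argument buys in exchange is robustness: it only needs a tail bound on $\varrho(\{|h|>\delta\})$, so it adapts directly to other ways of controlling that set; your interpolation argument buys sharper constants and a transparent explanation of why the stated exponents are not optimal.

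One point you were right to flag, and which deserves emphasis: with the \emph{unweighted} Lebesgue norm $L^q(\Omega)$ and a constant independent of $\varrho$, the inequality as literally printed is not tenable for arbitrary absolutely continuous $\varrho$ (concentrating the density of $\varrho$ near the maximum of $|h|$ makes the left side order $\|h\|_{L^\infty}$ while $\|h\|_{L^q(\Omega)}\to 0$). The statement should therefore be read with $L^q(\Omega,\varrho)$ norms --- consistent with the bound $\was_2(f_{\#}\varrho,g_{\#}\varrho)\leq\|f-g\|_{L^2(\Omega,\varrho)}$ quoted in Sec.~2.3 --- or with an implicit constant absorbing $\|r\|_{\infty}$, which is harmless in this paper's applications since $r\in C^1(\Omega)$ (or $r\in W^{m,1}$ in $d=1$, hence bounded) on a compact domain. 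Your proof, with this reading, is complete.
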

 
\begin{proof}[Proof of Theorem \ref{thm:l1_1dwass}]
To apply~\eqref{eq:cw20} to $\mu$ and $\nu_n$, we need to show that their densities are sufficiently regular.
\begin{lemma}\label{lem:p_regularity}
Let $m\geq 1$ and let $f \in W^{2m+4,2}(\Omega)$ with $|f'(\alpha)|\geq \kappa_f > 0$ for all $\alpha \in I$, and denote $A={\rm range}(f)\subset \mathbb{R}$. Then $p_{\mu}\in W^{m,1}(A)$, $p_{\nu_n}\in W^{m,1}(A)$ for sufficiently large $n$, and $$\|p_{\nu_n}\|_{W^{m,1}(A)} \lesssim \|p_{\mu}\|_{W^{m,1}(A)} \, , $$
where the implicit constant does not depend on $n$.
\end{lemma}\label{lem:p_w11}
\begin{proof}[proof of lemma]
 We begin, for simplicity, with $m=1$. By \eqref{eq:pdfder}, if $f$ is monotonic, formally,
\begin{equation}\label{eq:pdf_der}
\frac{d}{dy}p_{\mu}(y) = -r(f^{-1}(y))\frac{f^{
''}(f^{-1}(y))}{|f'(f^{-1}(y))|^3} + \frac{r'(f^{-1}(y))}{|f'(f^{-1}(y))|^2} \, .
\end{equation}
By Sobolev-Morrey embedding \eqref{eq:embedding}, $f\in W^{6,2}(\Omega)\subseteq  C^2(\Omega)$. Combined with the fact that $|f'|>\kappa_f >0$, then $\frac{d}{dy}p_{\mu}(y)\in W^{1,1}(\mathbb{R})$. For $g_n$, due to Lemma \ref{lem:c2}, $|g_n '(\alpha)|>\kappa_f /2 >0$ on $\Omega$ for all sufficiently large $n$, and $\|g_n''\|_{\infty}$ is also uniformly bounded for all $n\geq 1$ ($g_n$ is a polynomial, and therefore smooth). By the analog of~\eqref{eq:pdf_der} for $\nu_n$ and $g_n$, we have that $p_{\nu_n} \in W^{1,1}(\mathbb{R})$. Since $|g_n''|$ is uniformly bounded in terms of $f$ and its derivatives, $\|p_{\nu_n}\|_{W^{1,1}(\mathbb{R})}\lesssim \|p_{\mu}\|_{W^{1,1}(\mathbb{R})}$ for all sufficiently large $n$. Here it is key that our domain is one-dimensional, that $g_n$ interpolates $f$ at the endpoints $\alpha = \pm 1$, and that both functions are monotonic. These three facts ensure that $$A={\rm supp}(\mu) = {\rm image}(f) = {\rm image}(g_n) = {\rm supp}(\nu_n) \, .$$
Suppose otherwise that, without loss of generality, $\max f(\alpha) \geq \max g_n (\alpha) = y_*$. In this case $p_{\nu _n}$ would generically have a step-like discontinuity at $y_*\in A$, and therefore $p_{\nu_n}\not\in W^{1,1}(A)$.

For a general $m\geq 1$, by direct differentiation one has that $\frac{d^m}{dy^m} p_{\mu} (y)$ is a sum of rational functions where the numerators depend on $f', \ldots ,f^{(m+1)}$ and in $r, \ldots, r^{(m)}$, and the denominators are monomials in $f'$. One then generalizes Lemma \ref{lem:c2} to show that adequately-high derivatives of $g_n$ are uniformly bounded in $n$, which concludes the Lemma.
\end{proof}

Lemma \ref{lem:p_w11} implies that \eqref{eq:cw20} is applicable in our settings, and that $(\|p_{\mu}\|_{W^{m,1}(A)} +\|p_{\nu_n}\|_{W^{m,1}(A)})^{1/m+1} \lesssim \|p_{\mu}\|_{W^{m,1}(A)}^{1/m+1}$, where the upper bound depends only on $f$ and its derivatives. Therefore
\begin{equation}\label{eq:cw20_applied}
\|p_{\mu}-p_{\nu_n}\|_{L^1(\mathbb{R})} = \|p_{\mu}-p_{\nu_n}\|_{L^1(A)} \lesssim \|p_{\mu}\|_{W^{m,1}(A)}^{\frac{1}{m+1}} \was_1^{\frac{1}{m+1}} (\mu, \nu_n) \, .
\end{equation}
Since $\Omega$ is compact, \eqref{eq:wass_inter} can be applied to $\was_1(\mu,\nu_n)$ such that \eqref{eq:cw20_applied} yields
\begin{equation}\label{eq:l1_bdinfty2}
\|p_{\mu}-p_{\nu_n}\|_{L^1(\mathbb{R})} \lesssim \|p_{\mu}\|_{W^{m,1}(A)}^{\frac{1}{m+1}} \|f-g_n\|_{L^{2}(\Omega )}^{\frac{2}{3(m+1)}}\|f-g_n\|_{L^{\infty}(\Omega)}^{\frac{1}{3(m+1)}} \, .
\end{equation}
By the spectral $L^2$ convergence, $\|f-g_n\|_{L^2} \lesssim W^{\sigma,2}n^{-\sigma}$, see \eqref{eq:canuto}. To bound the $L^{\infty}$ error, we use \eqref{eq:embedding} in conjuction with \eqref{eq:canuto} again, yielding $\|f-g_n\|_{C^0} \lesssim  \|f\|_{W^{\sigma, 2}}n^{-\sigma +2\frac12}$. Applying both of these upper bounds to \eqref{eq:l1_bdinfty2} than yields
\begin{align*}
\|p_{\mu}-p_{\nu_n}\|_{L^1(\mathbb{R})} 
&\lesssim  \|p_{\mu}\|_{W^{m,1}(A)}^{\frac{1}{m+1}}\|f\|_{W^{\sigma,2}(\Omega)}^{\frac{1}{m+1}} n^{-\frac{m}{m+1}(\sigma - \frac56)} \, ,
\end{align*}

\end{proof}

The following heuristic argument suggests that the condition $|f'|>\kappa_f >0$ is necessary for the proof of Theorem \ref{thm:l1_1dwass}. Let $r\equiv 1/2$ be the uniform density, and suppose without loss of generality that $f$ is monotonic increasing, that $f(0)=f'(0)=0$, and that by Taylor expansion $f(\delta) = c\delta^k + o (\delta ^k)$ for some integer $k\geq 2$ and $|c|>0$ as $\delta \to 0$. Then $f'(\delta) = kc\delta^{k-1} + o(\delta ^{k-1})$ and by direct substitution into \eqref{eq:pdf_der} $$\frac{d}{dy} p_{\mu}'(y) \sim y^{-2 + 1/k} \, ,\qquad y\to 0 \, ,$$ which is not integrable in any neighborhood of $y=0$. Hence, $p_{\mu}\not\in W^{k,1}(A)$ for any $k\geq 1$, and we cannot use \eqref{eq:cw20}.

\section{Numerical experiments and open questions}\label{sec:numerics}

\begin{figure}[h]
\centering
{\includegraphics[scale=.3]{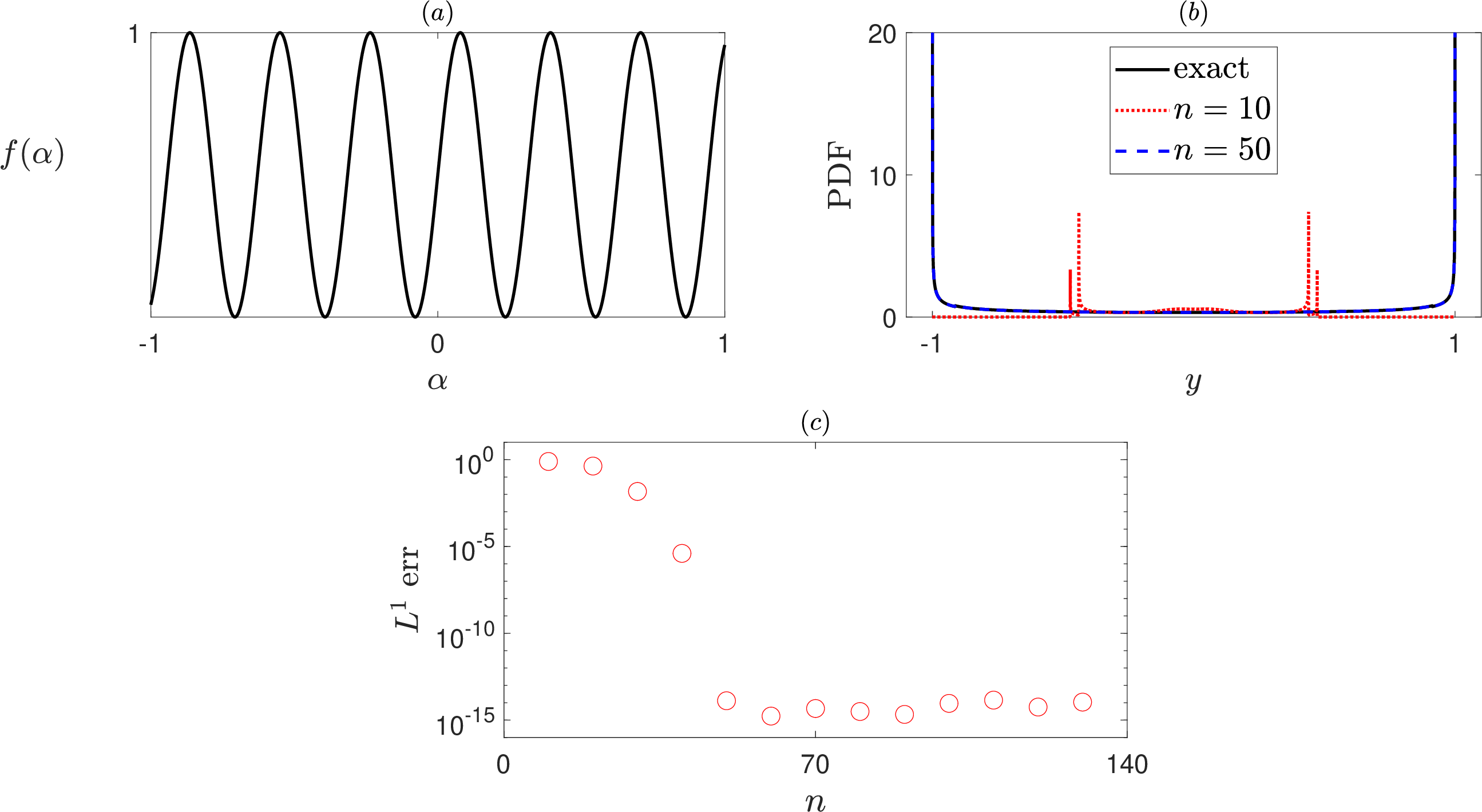}}
\caption{PDF approximation for $\mu = f_{\sharp}\varrho$ where $f(\alpha) = \sin (20\alpha)$ and $\varrho$ is the uniform probability measure on $[-1,1]$. (a)~$f(\alpha)$. (b) $p_{\mu}$ (solid, black), $p_{\nu_{10}}$ (dots, red) and $p_{\nu_{50}}$ (dashes, blue), where $\nu_n = (g_n)_{\sharp}\varrho$ where $g_n$ is the gPC collocation approximation of order $n$. $p_{\mu}$ and $p_{\nu_{50}}$ are nearly indistinguishable. (c)~$\|p_{\mu}-p_{\nu_n}\|_1$ as a function of~$n$. }
\label{fig:sin}
\end{figure}

We highlight some aspects of density-approximation using the gPC collocation method \eqref{eq:gpc_col} by numerical experiments on $\Omega = [-1,1]$ where $d\varrho (\alpha) = 1/2 \, d\alpha$ is the uniform probability measure on $\Omega$. We first consider $f(\alpha)=\sin (20\alpha)$, see Fig.~\ref{fig:sin}(a). The approximation in $L^2(\Omega)$ and $H^1(\Omega)$ of $f$ by polynomials is quite standard: a small number of collocation points (conversely, a low-order polynomial) does not suffice to resolve the oscillations of $f$. Once $n$ is sufficiently large, since $f$ is analytic, we expect $g_n$ to converge to $f$ in $H^s(\Omega)$ exponentially fast, for every $s\geq 0$. For the PDF, we see that indeed $p_{\nu_{10}}$ approximates $p_{\mu}$ quite poorly, whereas $p_{\nu_{50}}$ is nearly indistinguishable from $p_{\mu}$; see Fig.\ \ref{fig:sin}(b). Quantitatively, the $L^1(\mathbb{R})$ error between the PDFs follows the expected pattern - no convergence for $n\leq 30$, but then a sharp, exponential decay until machine-precision is reached; see Fig.\ \ref{fig:sin}(c). Another interesting facet of this example is that since $f'=0$ at several points, $p_{\mu}$ is singular at $\pm 1$, see \eqref{eq:pdfder}. Theorem \ref{thm:main_1d} therefore implies that $\|p_{\mu}-p_{\nu_n}\|_{L^1(\mathbb{R})} \lesssim \|f-g_n\|_{H^1(\Omega)}^{1/3}$ since $f''\neq 0$ at the maximas and minimas, see \eqref{eq:conv_sing}. However, since $\|f-g_n\|_{H^1(\Omega)}$ decays exponentially with $n$, the effect of this loss of accuracy is hardly noticeable.

\begin{figure}[h]
\centering
{\includegraphics[scale=.35]{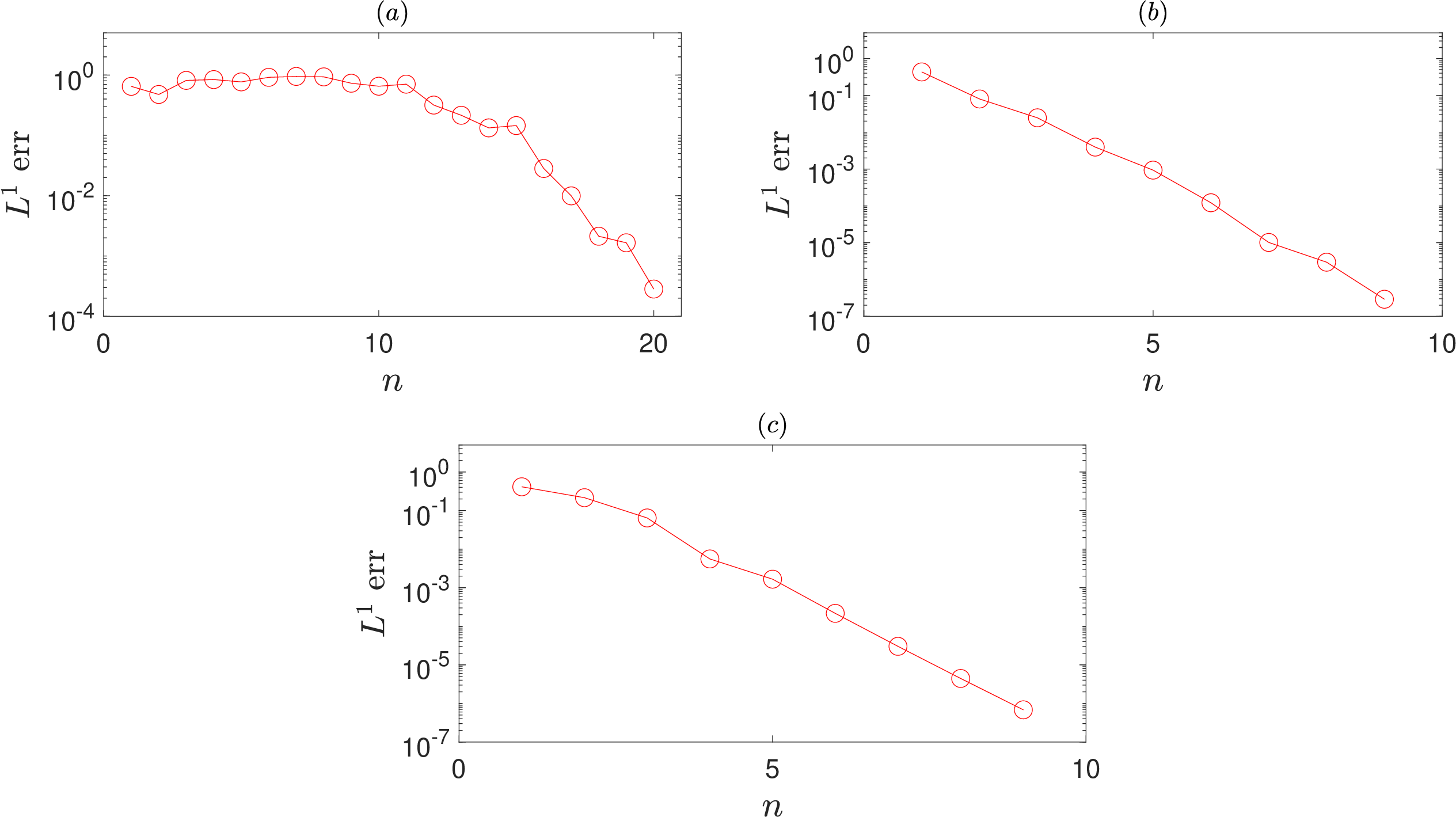}}
\caption{$\|p_{\mu}-p_{\nu_n}\|_1$ as a function of $n$ in higher dimensions. (a) $f(x,y)=\sin (10(x+y))$. (b) $f(x,y)=\sin (x+y)$. (c) $f(x,y,z)=~\sin (x+y+z)$.}
\label{fig:multid}
\end{figure}

We repeat the same experiment in higher dimension, using the gPC interpolant on the Gauss-Legendre tensor-product quadrature points (using the Sparse Grid Matlab Kit \cite{back2011sparse}). First, for $d=2$, we test $f(x,y)=\sin (10(x+y))$, for which we get similar behavior as we got for $d=1$; compare Fig.\ \ref{fig:sin}(c) with Fig.\ \ref{fig:multid}(a). Next, to observe ``simpler'' exponential convergence, we test in $d=2,3$ the functions $f(x,y)=\sin (x+y)$ and $f(x,y,z)=\sin (x+y+z)$, in Figs.\ \ref{fig:multid}(b) and \ref{fig:multid}(c), respectively. Note that in all three figures the error convergence is plotted against $n$, the maximal polynomial degree, see e.g., \eqref{eq:galerkin}. The computational cost, however, as represented by the number of grid point, hence the number of evaluations of $f$, scales is $(n+1)^d$.

\begin{figure}[h]
\centering
{\includegraphics[scale=.3]{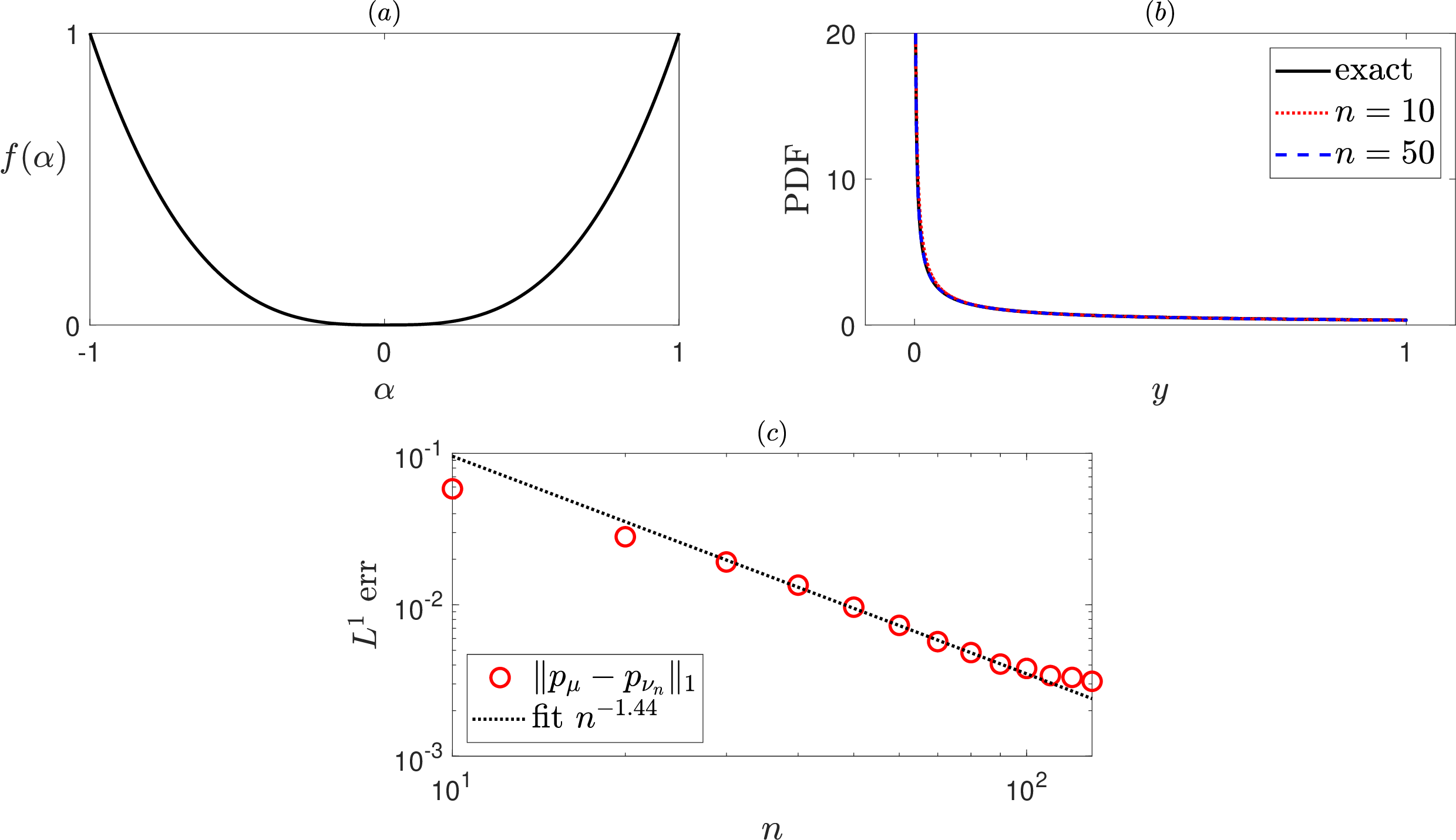}}
\caption{Same as Fig.\ \ref{fig:sin} for $f(\alpha) = |\alpha|^3$. In (c), a polynomial fit of $2.62n^{-1.44}$ is presented (dots, black).}
\label{fig:abscube}
\end{figure}

Our next two examples are of non-smooth functions. It is generally not advisable to approximate such functions with global polynomial methods such as gPC, and we do not promote this as a strategy in this work. Rather, we consider non-smooth functions to examine the sharpness and scope of our theory. In Fig.\ \ref{fig:abscube} we consider $f(\alpha)=|\alpha|^3$. This is case not covered by Theorem~\ref{thm:main_1d}, as $f\in~ H^3(\Omega)$ but not in $H^4(\Omega)$. Notwithstanding, we observe numerically that $\|p_{\mu}-~p_{\nu_n}\|_{L^1(\mathbb{R})} \lesssim n^{-1.44}$, which is comparable to $\|f-g_n\|_{H^1(\Omega)}$ which by \eqref{eq:canuto} converges as $n^{-1.5}$. Recall that the requirement in Theorem \ref{thm:main_1d} that $f\in H^6$ stems from our use of Sobolev embedding in Lemma \ref{lem:c2} to guarantee that $\|g_n\|_{C^2}$ is uniformly bounded in $n$. This boundedness is numerically satisfied in this example (results not shown), even though it is not guaranteed by our current theory.

Numerically, it seems that the $k$-dependence of the upper bound \eqref{eq:conv_sing} might not be tight. Since $f'(0)=f''(0)=0$, by applying \eqref{eq:conv_sing} with $k=2$ we expect that $\|p_{\mu}-p_{\nu_n}\|_{L^1(\mathbb{R})} \lesssim \|f-g_n\|_{H^1(\Omega)}^{1/5} \lesssim -n^{-0.3}$, which is much slower then what we observe in practice. Finally, consider a third function, $f(\alpha) = |\alpha - 0.5|$, which is in $H^1(\Omega)$ but not in $H^2(\Omega)$, see Fig.\ \ref{fig:abs}. While it is certainly not good practice to approximate such non-smooth functions with global polynomials, it is interesting to see that even so, $\|p_{\mu}-p_{\nu _n}\|_{L^1(\mathbb{R})} \lesssim n^{-0.78}$. In comparison, since $f\not\in H^2(\Omega)$, the theoretically-predicted $H^1(\Omega)$ convergence rate of $g_n$ to $f$ by \eqref{eq:canuto} is slower than $-1/2$, but was computed to be roughly $-0.6$ (results not shown). That convergence is obtained implies that stronger mechanisms of PDF convergence at at play than what our current theory accounts for.

\begin{figure}[h]
\centering
{\includegraphics[scale=.3]{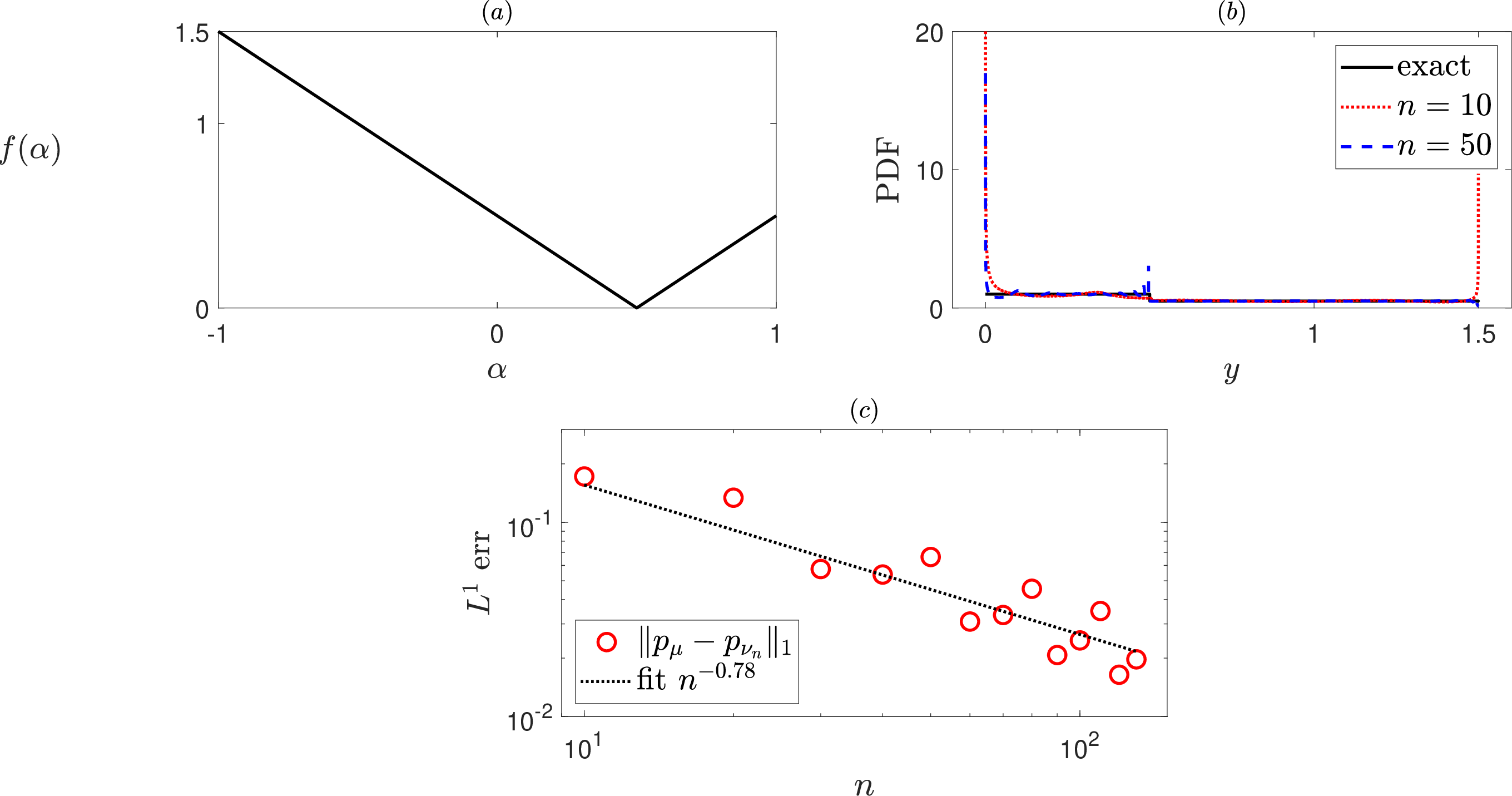}}
\caption{Same as Fig.\ \ref{fig:sin} for $f(\alpha) = |\alpha-0.5|$. In (c), a fit of $0.91n^{-0.78}$ is presented (dots, black).}
\label{fig:abs}
\end{figure}

%As Figs.\ \ref{fig:abscube} and \ref{fig:abs} suggest, our Theorem \ref{thm:main_1d} is probably not the sharpest result possible for $d=1$. By the shear method of proof, we expect that the same is true for $d>1$ and Theorem \ref{thm:main_md}. 

This work also lays the foundations to the study of pushforwards by sparse-grid representations, which are of great practical importance. A key impediment to approximation in the multidimensional case $d>1$ is the so-called curse of dimensionality - the computational cost of constructing $g_n$ grows exponentially with the dimension $d$. For example, the collocation gPC on a tensor-grid of quadrature-points requires sampling $f$ at $n^d$ points, which becomes non-feasible already for relatively moderate dimensions. A common approach to this problem in $L^2$ approximation (which in the UQ context is equivalent to the approximation of moments) is by using sparse grid, which allow for superior orders of convergence \cite{back2011sparse, bungartz2004sparse, xiu2010numerical, xiu2005collocation}. Applying our current analytic approach to sparse grids would require a detailed analysis of the convergence of sparse quadratures in high-order Sobolev spaces.\footnote{Not to be confused with the more standard topic of $L^2(\Omega)$ or $H^1(\Omega)$ convergence sparse-representations of high-regularity functions $f\in H^s(\Omega)$ for $s>1$.} To the best of our knowledge, there are few relevant results in this direction, see~\cite{griebel2009optimized} and the references therein. However, as Theorem \ref{thm:main_1d} suggests for $d=1$, it might be that a sharper result is possible in $d>1$ as well. Such improvements of Theorem~\ref{thm:main_md} might relax the dependence in high-order Sobolev norms to $H^1(\Omega)$ convergence, a widely studied and well understood topic \cite{berthelmann1999high, bungartz2004sparse}.

\section{Acknowledgments}
The author would like to thank A.\ Ditkowski, G.\ Fibich, S.\ Steinerberger, L.~Tamellini, H.\ Wang, and M.I.\ Weinstein for useful comments and discussions. The author acknowledges the support of the AMS-Simons Travel Grant.

\bibliographystyle{alpha}

\end{document}